\documentclass[a4paper,12pt]{article}
\usepackage[T1]{fontenc}
\usepackage{kpfonts, comment, amsfonts, amsmath, amssymb, amsthm, mathtools, geometry, setspace, enumitem}

\usepackage[svgnames,dvipsnames]{xcolor}
\definecolor{theme}{RGB}{170, 0, 0}

\usepackage[unicode=true]{hyperref}
\hypersetup{% Links and pdf setup
colorlinks=true, 
linkcolor=DarkGreen, % Colour of internal links
filecolor=black,
citecolor=DarkGreen, % Colour of internal citations
urlcolor=DarkBlue,
pdftitle={Treewidth of Products of Graphs with High Treewidth}, % Title in pdf view
pdfauthor={Kaul}
}

\usepackage[nameinlink,capitalise]{cleveref}

\usepackage[all]{hypcap}

\usepackage[longnamesfirst,numbers,sort&compress]{natbib}
\makeatletter
\def\NAT@spacechar{~}
\makeatother

\usepackage{tikz}
\usetikzlibrary{perspective}

\allowdisplaybreaks
\setlist{nolistsep}

\DeclareMathOperator{\tw}{tw}
\DeclareMathOperator{\bn}{bn}
\DeclareMathOperator{\sbn}{sbn}
\DeclareMathOperator{\pw}{pw}
\DeclareMathOperator{\ord}{ord}
\DeclareMathOperator{\had}{had}
\DeclareMathOperator{\degen}{degen}
\DeclareMathOperator{\cprod}{\kern0.1pt \square \kern0.1pt}

\renewcommand{\leq}{\leqslant}
\renewcommand{\geq}{\geqslant}
\renewcommand{\emptyset}{\varnothing}

\DeclarePairedDelimiter\floor{\lfloor}{\rfloor}
\DeclarePairedDelimiter\set{\{}{\}}

\let\angle\relax
\DeclarePairedDelimiter\angle{\langle}{\rangle}

\DeclareMathOperator{\lex}{\vcenter{\hbox{$\circ$}}}

\usepackage[mathscr]{euscript}
\DeclareMathOperator{\sB}{\mathscr{B}}
\DeclareMathOperator{\sS}{\mathscr{S}}

\newtheorem{theorem}{Theorem}
\newtheorem{lemma}[theorem]{Lemma}
\newtheorem{claim}{Claim}
\newtheorem{fact}[theorem]{Fact}
\crefname{fact}{Fact}{Facts}

\newtheorem{corollary}[theorem]{Corollary}
\newtheorem{question}[theorem]{Question}

\newenvironment{subproof}[1][Subproof]{
  
  \begin{proof}[#1]
}{
  \end{proof}
}

\newcommand{\define}[1]{{\color{theme}\textit{#1}}} 
\newcommand{\mathdefine}[1]{\mathcolor{theme}{#1}}

\newcommand\blfootnote[1]{%
  \begingroup
  \renewcommand\thefootnote{}\footnote{#1}%
  \addtocounter{footnote}{-1}%
  \endgroup
}

\title{\bf \LARGE Treewidth of Products of Graphs\\
with High Treewidth}

\author{\vspace{0.15cm} Raj Kaul}
\date{\vspace{-1.7cm}}

\begin{document}
\maketitle

\blfootnote{\textit{Date:} \today.\\School of Mathematics, Monash University, Melbourne, Australia (\texttt{raj.kaul@monash.edu}). Supported by an Australian Government Research Training Program Scholarship.}

\begin{abstract}
\noindent
Treewidth is the standard measure for how ``tree-like'' a graph is.
This paper studies how the treewidth of a product graph depends on the treewidth of its factors.
Kozawa, Otachi, and Yamazaki [2014] and Hickingbotham and Wood [2025] independently showed that $\tw(G\boxtimes H)\geq (\tw(G)+1)\had(H)-1$ for all graphs $G$ and $H$, where $\had(H)$ is the Hadwiger number of $H$. We improve this bound to $\tw(G\boxtimes H)\geq (\tw(G)+1)(\tw(H)+1)-1$, thereby solving an open problem of Hickingbotham and Wood.
We also prove analogous product inequalities for pathwidth, Cartesian products, and strict bramble number, which is a parameter that is tied to treewidth.
As an application of our results, we show that products of expanders have large subgraphs that are expanders.
\end{abstract}

\section{Introduction}
\label{sec:intro}

Treewidth\footnote{We consider simple, finite, and undirected graphs. A set $A\subseteq V(G)$ is \define{connected} in $G$ if $G[A]$ is connected. If $T$ is a tree and $x,y\in V(T)$, then \define{$xTy$} denotes the unique path in $T$ whose set of endpoints is $\set{x,y}$. 
A \define{tree-decomposition} of a graph $G$ is a pair $(T,\beta)$ consisting of a tree $T$ and a function $\beta:V(T)\rightarrow 2^{V(G)}$ with the following two properties: (\define{vertex-property}) for every vertex $v\in V(G)$, the set $\set{t\in V(T) : v\in \beta(t)}$ is non-empty and connected in $T$; and (\define{edge-property}) for every edge $uv\in E(G)$, there exists $t\in V(T)$ such that $\set{u,v}\subseteq \beta(t)$. The vertices of $T$ are called \define{nodes}, and the sets $\beta(t)$ are called \define{bags}. If the underlying tree $T$ is a path, then $(T,\beta)$ may be called a \define{path-decomposition}. The \define{width} of $(T,\beta)$ is $\max_{t\in V(T)} |\beta(t)|-1$, and the \define{treewidth} of $G$, denoted by \define{$\tw(G)$}, is the minimum width of a tree-decomposition of $G$. The pathwidth of $G$, denoted by \define{$\pw(G)$}, is the minimum width of a path-decomposition of $G$.}
is a foundational parameter in structural and algorithmic graph theory \cite{BODLAENDER19981, ReedAlgAspects, TreewidthParams2017} that measures the structural complexity of a graph.
Informally, graphs with small treewidth are more ``tree-like''. 
Grids are the canonical graphs with large treewidth, since the $n\times n$-grid has treewidth $n$ and, by the Grid-Minor Theorem of \citet{ROBERTSON198692}, every graph with large enough treewidth contains the $n\times n$-grid as a minor.\footnote{A graph $H$ is a \define{minor} of a graph $G$ if $H$ can be obtained from $G$ by a sequence of vertex deletions, edge deletions, and edge contractions.}
Inspired by the fact that grids are Cartesian products of paths, this paper studies the treewidth of various products of general graphs.

Our first contribution, proven in \cref{sec:treewidth-of-product-graphs}, is a lower bound on the treewidth of a strong product. The \define{strong product} of graphs $G$ and $H$, denoted by \define{$G\boxtimes H$}, is the graph with vertex set $V(G)\times V(H)$, where $(g_1, h_1)(g_2, h_2)\in E(G\boxtimes H)$ if and only if $g_1=g_2$ and $h_1h_2\in E(H)$; or $g_1g_2\in E(G)$ and $h_1=h_2$; or $g_1g_2\in E(G)$ and $h_1h_2\in E(H)$.

\begin{theorem}
\label{thm:main-strong-product}
For all graphs $G$ and $H$, $\tw(G\boxtimes H) \geq (\tw(G)+1)(\tw(H)+1)-1$.
\end{theorem}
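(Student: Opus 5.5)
Our plan is to use brambles and the Seymour--Thomas duality: $\tw(G)+1$ equals the maximum order of a bramble in $G$. We fix a bramble $\mathcal{B}$ in $G$ of order $\tw(G)+1$ and a bramble $\mathcal{C}$ in $H$ of order $\tw(H)+1$. We then build a bramble in $G\boxtimes H$ of order at least $(\tw(G)+1)(\tw(H)+1)$, which gives the bound by duality. The natural candidate is the product bramble $\mathcal{B}\times\mathcal{C}=\set{B\times C : B\in\mathcal{B}, C\in\mathcal{C}}$.

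First we check that it is a bramble. Each $B\times C$ is connected in $G\boxtimes H$, since strong products of connected sets are connected (and in fact connected already in the Cartesian product). Now take two members $B\times C$ and $B'\times C'$. Either $B,B'$ intersect or they are joined by an edge, and likewise for $C,C'$. In every combination we obtain either a common vertex or an edge of $G\boxtimes H$ between the two sets. This is exactly where the strong product is needed: if both factors only touch via edges, we need the diagonal edges. So $\mathcal{B}\times\mathcal{C}$ touches pairwise.

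The main obstacle is the order. A hitting set $X\subseteq V(G)\times V(H)$ need not be a product set, so we must show $|X|\geq(\tw(G)+1)(\tw(H)+1)$ directly. For each $h\in V(H)$ let $X_h=\set{g:(g,h)\in X}$. Let $S$ be the set of $h$ such that $X_h$ is \emph{not} a hitting set of $\mathcal{B}$. For $h\in S$ pick $B_h\in\mathcal{B}$ with $B_h\cap X_h=\emptyset$. If $|S|$ were large we would like a contradiction, but different $h$ choose different $B_h$, so the naive argument fails: $X$ could hit $B\times C$ at rows $h\in C$ where $X_h$ covers $B$ but not all of $\mathcal{B}$.

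To handle this we would use the minimax form in the ``fractional/LP'' sense, or better, replace brambles by a sturdier dual object. One option is to use strict brambles (every two members intersect) or separations/havens. Alternatively, we can take a weighted dual: treewidth is within the tree-decomposition duality given by ``balanced separators''. We would first attempt a direct argument. Assume $|X|<(\tw(G)+1)(\tw(H)+1)$. Then define $C^*=\set{h: |X_h|\le \tw(G)}$, which has size large enough that $C^*$ hits every member of... and this again fails since $V(H)\setminus C^*$ may be small but still hit $\mathcal{C}$.

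So the plan we would actually commit to is to avoid the bramble order counting and instead prove the bound via tree-decompositions directly. Take an optimal tree-decomposition $(T,\beta)$ of $G\boxtimes H$ and, for each $h$, restrict it to the copy $G\times\set{h}$. Using the fact that each $G$-copy needs a bag containing a bramble-hitting set, we would locate a node $t$ whose bag meets, for "many" $h$ in a hitting set of $\mathcal{C}$, at least $\tw(G)+1$ vertices of $G\times\set{h}$. The key lemma to establish is this: for the bramble $\mathcal{B}'=\set{B\times C}$, the unique "tangle direction" node forced by $\mathcal{B}$ in each copy is consistent across copies. We would define, for each $h$, the subtree $T_h$ towards which $\mathcal{B}\times\set{h}$ points. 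We would then show these subtrees behave like a tree-decomposition-style family for $H$, so that a Helly/bramble argument on $\mathcal{C}$ yields a node $t$ with $\beta(t)\cap (V(G)\times\set{h})$ covering $\mathcal{B}$ for a set of $h$ covering $\mathcal{C}$. That gives $|\beta(t)|\ge(\tw(G)+1)(\tw(H)+1)$. Making this consistency/Helly step rigorous is the crux.
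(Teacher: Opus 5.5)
\textbf{Verdict: right approach, but the key step is missing.} The route you finally commit to is the paper's. For each $h\in V(H)$, let $T_h$ be the set of nodes $t$ such that $\beta(t)$ is a hitting set for $\mathcal{B}\times\{h\}:=\{A\times\{h\}:A\in\mathcal{B}\}$. Put $\beta'(t):=\{h: t\in T_h\}$, show that $(T,\beta')$ is a tree-decomposition of $H$, and count. However, you stop exactly at the step you call the crux, so the proposal is not yet a proof.

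In this route you never say where the strong product is used. Your diagonal-edge argument belonged to the product bramble, which you were right to abandon: with $\mathcal{B}=\mathcal{C}$ the three edges of $K_3$ (each bramble of order $2$), the diagonal $\{(i,i)\}$ hits every $B\times C$, so the product bramble has order $3<4$. There is also no ``unique direction node'' to make consistent across copies. $T_h$ is a subtree, and all you need is that it is a non-empty subtree and that $T_u\cap T_v\neq\emptyset$ for every $uv\in E(H)$.

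\textbf{Vertex-property.} Note that $T_h=\bigcap_{A\in\mathcal{B}}(T,\beta)\langle A\times\{h\}\rangle$. Each $A\times\{h\}$ is connected, so the set of nodes whose bags meet it is connected. Hence $T_h$ is an intersection of subtrees of $T$, and is therefore connected. It is non-empty because $\mathcal{B}\times\{h\}$ is a bramble, and every tree-decomposition has a bag hitting any given bramble. This is not plain Helly, since the traces only pairwise touch.

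\textbf{Edge-property (the missing idea).} When $uv\in E(H)$, the union $(\mathcal{B}\times\{u\})\cup(\mathcal{B}\times\{v\})$ is itself a bramble in $G\boxtimes H$. Take $U,V\in\mathcal{B}$. Either some $w\in U\cap V$ gives the edge $(w,u)(w,v)$, or some edge $pq\in E(G)$ with $p\in U$, $q\in V$ gives the diagonal edge $(p,u)(q,v)$. So a single bag hits both families, i.e.\ $T_u\cap T_v\neq\emptyset$.

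\textbf{Counting.} Since $(T,\beta')$ is a tree-decomposition of $H$, some $\beta'(t)$ has at least $\tw(H)+1$ elements. No separate Helly argument with $\mathcal{C}$ is needed. The rows $V(G)\times\{h\}$ are disjoint, and $\beta(t)$ meets each row $h\in\beta'(t)$ in a hitting set of $\mathcal{B}\times\{h\}$, so it has at least $\tw(G)+1$ vertices in each such row. Therefore $|\beta(t)|\geq(\tw(G)+1)(\tw(H)+1)$.

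Your intermediate phrasing, ``meets at least $\tw(G)+1$ vertices of $G\times\{h\}$'', is too weak to support the subtree argument. The invariant must be that the bag hits all of $\mathcal{B}\times\{h\}$.
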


\cref{thm:main-strong-product} improves upon the bound $\tw(G\boxtimes H)\geq (\tw(G)+1)\had(H)-1$, where \define{$\had(H)$} denotes the Hadwiger number of $H$,\footnote{The \define{Hadwiger number} of a graph $H$ is the maximum integer $t$ such that $K_t$ is a minor of $H$. Note that since treewidth is minor-monotone, $\tw(H) \geq \tw(K_{\had(H)}) = \had(H)-1$.}
which was shown independently by \citet{KOZAWA2014251} and \citet{HickingbothamStructuralPropertiesofGraphProducts}. Furthermore, \cref{thm:main-strong-product} solves an open problem of \citet{HickingbothamStructuralPropertiesofGraphProducts}, who asked whether there exists a constant $c>0$ such that $\tw(G\boxtimes H) \geq c\tw(G)\tw(H)$ for all graphs $G$ and $H$. \cref{thm:main-strong-product} says the answer is `yes' with $c=1$.

Our second contribution, proven in \cref{sec:pathwidth-of-strong-product}, is that \cref{thm:main-strong-product} holds when treewidth is replaced by pathwidth:

\begin{theorem}
\label{thm:main-strong-product-pw}
For all graphs $G$ and $H$, $\pw(G\boxtimes H) \geq (\pw(G)+1)(\pw(H)+1)-1$.
\end{theorem}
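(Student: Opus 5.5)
Pathwidth has a min–max dual, just as treewidth has brambles, and I plan to use it. By the vertex-separation characterisation (equivalently, the node-search or linear-ordering game), $\pw(G)\le k$ if and only if there is a linear ordering $v_1,\dots,v_n$ of $V(G)$ such that for every $i$ at most $k$ vertices among $v_1,\dots,v_i$ have a neighbour in $\{v_{i+1},\dots,v_n\}$. The dual obstruction is the following. $\pw(G)\ge k$ if and only if every linear ordering has some prefix $P_i$ whose boundary set $\partial P_i$ (vertices of $P_i$ with a neighbour outside $P_i$) has size at least $k$. To mirror the treewidth proof, I would work with a product-friendly version of this obstruction, a \emph{path-bramble}. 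By this I mean a family of connected sets that every path-decomposition must hit heavily in a single bag, that is, a "linear" bramble in the sense of Bienstock--Robertson--Seymour--Thomas. I expect the paper's treewidth proof to rest on a bramble or strict-bramble product lemma. In that case the natural plan is to establish the analogous lemma for pathwidth and feed it through the same product construction.

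The concrete steps are as follows.
\begin{enumerate}
\item Fix optimal obstructions in $G$ and in $H$. I would use the monotone-search formulation: in any path-decomposition of $G$, some bag together with the "already swept" side forces $\pw(G)+1$ vertices.
\item Take an arbitrary path-decomposition $(P,\beta)$ of $G\boxtimes H$, with bags $B_1,\dots,B_m$. For each $g\in V(G)$, the copy $\{g\}\times V(H)$ induces a graph containing $H$. Restricting $(P,\beta)$ to this copy gives a path-decomposition of it, so some bag meets the copy in at least $\pw(H)+1$ vertices. More usefully, I record a "time" for each $g$. I would define $g$ to be \emph{active} in bag $B_j$ when $B_j$ meets $\{g\}\times V(H)$ in at least $\pw(H)+1$ vertices. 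Alternatively, I would use the time at which the copy is "half swept".
\item Build a path-decomposition of $G$ from these times. Let $\beta'(j)$ be the set of $g$ that are partially swept at step $j$, meaning some vertex of $\{g\}\times V(H)$ has been introduced and some vertex has not yet been forgotten. Each $G$-edge $gg'$ yields edges between the copies, so these sets satisfy the path-decomposition axioms. Hence some $j$ has $|\beta'(j)|\ge\pw(G)+1$.
\item Show that each $g\in\beta'(j)$ contributes at least $\pw(H)+1$ vertices to $B_j$. This gives $|B_j|\ge(\pw(G)+1)(\pw(H)+1)$.
\end{enumerate}

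The main obstacle is step 4. A copy that is only partially swept at time $j$ need not have $\pw(H)+1$ vertices in $B_j$. What holds instead is the following: if the swept part $S$ is proper and nonempty, then $\partial S$ within the copy lies in $B_j$. This boundary can be small, since the pathwidth of $H$ is only forced at some time, not at every time.

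To repair this, I would exploit the extra connectivity of the strong product. Each vertex $(g,h)$ with $g\in\beta'(j)$ is adjacent to the whole closed neighbourhood $N[g]\times N[h]$. This lets me use the ordering-based definition of pathwidth in a "lexicographic" way. Order $G$ by the first time each copy becomes heavily met, then choose $j$ to be the critical cut of $G$'s ordering. I would then argue that every copy in the separating set is cut at a moment when its $H$-side separator is forced to have size $\pw(H)+1$.

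If this direct approach fails, I would instead prove an interpolation lemma. The aim is a single "time" $j$ that is simultaneously critical for $G$ and, within each separating copy, critical for $H$. I would obtain it by applying the vertex-separation duality to the auxiliary lexicographic-type ordering of $G\boxtimes H$ induced by $(P,\beta)$.
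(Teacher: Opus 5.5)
\textbf{There is a genuine gap, and it is step 4.} Your diagnosis of step 4 is correct, but neither proposed repair closes it, and step 4 is where all the content lies.

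Ordering $G$ by the first time each copy $\{g\}\times V(H)$ is heavily met, and then cutting at a critical prefix, only tells you that the copies in the separator were heavily met at \emph{some} earlier time. It says nothing about the critical time itself: a copy may meet one bag in $\pw(H)+1$ vertices and every later bag in very few. Moreover, a critical prefix in the vertex-separation sense supplies only $\pw(G)$ copies, not $\pw(G)+1$. The interpolation lemma restates what must be proved rather than giving a route to it.

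What you need is a rule deciding, for every copy and every time, whether the copy is before, inside, or after its critical phase, such that:
\begin{itemize}
\item[(i)] the critical phase is a nonempty interval;
\item[(ii)] every bag in it meets the copy in at least $\pw(H)+1$ vertices;
\item[(iii)] the phases of adjacent copies overlap.
\end{itemize}
Your candidate rules each miss one of these. The \emph{heavily met} rule fails (i), and \emph{partially swept} fails (ii). A bramble of $H$ (active when the bag hits the copied bramble) gives (i) and (iii), but only $\tw(H)+1$ in (ii), i.e.\ the weaker bound $(\pw(G)+1)(\tw(H)+1)-1$. Bienstock, Robertson, Seymour and Thomas do not provide a linear bramble. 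Their dual object for pathwidth is a blockage, equivalently a stoppage, and that is the missing ingredient.

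\textbf{How a stoppage completes your plan.} Let $\mathscr{H}$ be a stoppage of order $\pw(H)$ in $H$ and let $B_1,\dots,B_m$ be the bags. Let $L_{j,g}$ (respectively $R_{j,g}$) be the set of $h$ such that $(g,h)$ lies in some bag of index at most $j$ (respectively at least $j$). Then $(L_{j,g},R_{j,g})$ is a cut of $H$ with separator $B_j\cap(\{g\}\times V(H))$. Call $j$ \emph{early} (\emph{late}) for $g$ if this separator has at most $\pw(H)$ vertices and $(L_{j,g},R_{j,g})\in\mathscr{H}$ (respectively $(R_{j,g},L_{j,g})\in\mathscr{H}$).
\begin{itemize}
\item[(a)] The second stoppage axiom forces $j'\geq j+2$ whenever $j$ is early and $j'$ is late. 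It also shows that $1$ is never late and $m$ is never early.
\item[(b)] Hence the set $W_g$ of times after every early time and before every late time is a nonempty interval. Each $B_j$ with $j\in W_g$ meets the copy in at least $\pw(H)+1$ vertices.
\item[(c)] Suppose $gg'\in E(G)$ and $W_g$ lay entirely before $W_{g'}$. Then there would be a late $j$ for $g$ and an early $j'$ for $g'$ with $j\leq j'+1$. Every clique $\{g,g'\}\times\{h,h'\}$ with $hh'\in E(H)$ or $h=h'$ lies in a single bag, so $H[R_{j,g}]\cup H[L_{j',g'}]=H$. This contradicts the second axiom.
\end{itemize}
So $j\mapsto\{g : j\in W_g\}$ is a path-decomposition of $G$ for which your step 4 holds.

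\textbf{The paper's route is different.} It uses stoppages in both factors to build a stoppage of order $(\pw(G)+1)(\pw(H)+1)-1$ directly in $G\boxtimes H$. Each such cut of the product is mapped, layer by layer, to a $\pw(H)$-cut of $H$ and oriented by the stoppage of $H$.
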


\cref{thm:main-strong-product,thm:main-strong-product-pw} 
are optimal since $\tw(K_n) = \pw(K_n)=n-1$
and $\tw(K_m \boxtimes K_n) = \tw(K_{mn}) = mn-1 = (\tw(K_m)+1)(\tw(K_n)+1)-1$.

In addition to the strong product, this paper also studies
Cartesian products and lexicographic products, which we now introduce. The \define{Cartesian product} of graphs $G$ and $H$, denoted by \define{$G\cprod H$}, is the graph with vertex set $V(G)\times V(H)$, where $(g_1, h_1)(g_2, h_2)\in E(G\cprod H)$ if and only if $g_1=g_2$ and $h_1 h_2\in E(H)$; or $g_1 g_2\in E(G)$ and $h_1=h_2$. The \define{lexicographic product} of graphs $G$ with $H$, denoted by \define{$G\lex H$}, is the graph with vertex set $V(G)\times V(H)$, where $(g_1, h_1)(g_2, h_2) \in E(G\lex H)$ if and only if $g_1g_2\in E(G)$; or $g_1=g_2$ and $h_1h_2\in E(H)$.

Both $\cprod$ and $\boxtimes$ are commutative (through isomorphism). On the other hand, $\lex$ is not commutative. As illustrated in \Cref{fig:product-examples}, the basic relationship between the different products is $G\cprod H\subseteq G\boxtimes H \subseteq G\lex H$.

\begin{figure}[h]
\centering

\def\s{1.3} % tikz scale

\def\gcol{gray!50} % double colour

\begin{tikzpicture}[vertex/.style={circle,draw,fill=black,inner sep=1.5pt, black}, /tikz/xscale=\s, /tikz/yscale=\s, 3d view={-27}{15}, bline/.style={line width=0.5pt, double=\gcol, double distance=0.25pt}]
%%
% Cartesian
%%

\foreach \i in {0,1}{
\foreach \j in {0,1}{
\foreach \x in {0,1,2,3}{
\foreach \l in {0,1}{
\foreach \k in {0,1}{
\foreach \y in {0,1,2,3}{

    \ifnum \inteval{\i+\j}>0
    \ifnum \inteval{\k+\l}>0
    
    \ifnum \x=\y
        \ifnum \inteval{(\i-\l)*(\i-\l)+(\j-\k)*(\j-\k)}=1
            \draw[bline] (\i,\j,\x) to (\l,\k,\y);
        \fi
    \fi

    \ifnum \inteval{(\x-\y)*(\x-\y)}=1
        \ifnum \i=\l
            \ifnum \j=\k
                \draw[bline] (\i,\j,\x) to (\l,\k,\y);
            \fi
        \fi
    \fi

    \fi
    \fi
}}}
}}}

\foreach \i in {0,1}{
\foreach \j in {0,1}{
\foreach \x in {0,1,2,3}{
    \ifnum \inteval{\i+\j}>0
        \node[vertex, inner sep=1.3pt] at (\i,\j,\x) {};
    \fi
}}}

% front

\foreach \i in {0,1}{
\foreach \j in {0,1}{
\foreach \x in {0,1,2,3}{
\foreach \l in {0,1}{
\foreach \k in {0,1}{
\foreach \y in {0,1,2,3}{

    \ifnum \inteval{\i+\j}<2
    \ifnum \inteval{\k+\l}<2
    
    \ifnum \x=\y
        \ifnum \inteval{(\i-\l)*(\i-\l)+(\j-\k)*(\j-\k)}=1
            \draw[bline] (\i,\j,\x) to (\l,\k,\y);
        \fi
    \fi

    \ifnum \inteval{(\x-\y)*(\x-\y)}=1
        \ifnum \i=\l
            \ifnum \j=\k
                \draw[bline] (\i,\j,\x) to (\l,\k,\y);
            \fi
        \fi
    \fi

    \fi
    \fi
}}}
}}}

\foreach \i in {0,1}{
\foreach \j in {0,1}{
\foreach \x in {0,1,2,3}{
    \ifnum \inteval{\i+\j}<2
        \node[vertex] at (\i,\j,\x) {};
    \fi
}}}
\end{tikzpicture}
\qquad\qquad %
\begin{tikzpicture}[vertex/.style={circle,draw,fill=black,inner sep=1.5pt, black}, /tikz/xscale=\s, /tikz/yscale=\s, 3d view={-27}{15}, bline/.style={line width=0.5pt, double=\gcol, double distance=0.25pt}, line/.style={line width=0.5pt, DarkRed, double=\gcol, double distance=0.25pt}]
%%
% Strong
%%

% back
\foreach \i in {0,1}{
\foreach \j in {0,1}{
\foreach \x in {0,1,2,3}{
\foreach \l in {0,1}{
\foreach \k in {0,1}{
\foreach \y in {0,1,2,3}{

    \ifnum \inteval{\i+\j}>0
    \ifnum \inteval{\k+\l}>0
    
    \ifnum \x=\y
        \ifnum \inteval{(\i-\l)*(\i-\l)+(\j-\k)*(\j-\k)}=1
            \draw[bline] (\i,\j,\x) to (\l,\k,\y);
        \fi
    \fi

    \ifnum \inteval{(\x-\y)*(\x-\y)}=1
        \ifnum \i=\l
            \ifnum \j=\k
                \draw[bline] (\i,\j,\x) to (\l,\k,\y);
            \fi
        \fi
    \fi

    \ifnum \inteval{(\x-\y)*(\x-\y)}=1
        \ifnum \inteval{(\i-\l)*(\i-\l)+(\j-\k)*(\j-\k)}=1
            \draw[line, opacity=0.2] (\i,\j,\x) to (\l,\k,\y);
        \fi
    \fi

    \fi
    \fi
}}}
}}}

\foreach \i in {0,1}{
\foreach \j in {0,1}{
\foreach \x in {0,1,2,3}{
    \ifnum \inteval{\i+\j}>0
        \node[vertex, inner sep=1.3pt] at (\i,\j,\x) {};
    \fi
}}}

% front

\foreach \i in {0,1}{
\foreach \j in {0,1}{
\foreach \x in {0,1,2,3}{
\foreach \l in {0,1}{
\foreach \k in {0,1}{
\foreach \y in {0,1,2,3}{

    \ifnum \inteval{\i+\j}<2
    \ifnum \inteval{\k+\l}<2
    
    \ifnum \x=\y
        \ifnum \inteval{(\i-\l)*(\i-\l)+(\j-\k)*(\j-\k)}=1
            \draw[bline] (\i,\j,\x) to (\l,\k,\y);
        \fi
    \fi

    \ifnum \inteval{(\x-\y)*(\x-\y)}=1
        \ifnum \i=\l
            \ifnum \j=\k
                \draw[bline] (\i,\j,\x) to (\l,\k,\y);
            \fi
        \fi
    \fi

    \ifnum \inteval{(\x-\y)*(\x-\y)}=1
        \ifnum \inteval{(\i-\l)*(\i-\l)+(\j-\k)*(\j-\k)}=1
            \draw[line] (\i,\j,\x) to (\l,\k,\y);
        \fi
    \fi

    \fi
    \fi
}}}
}}}

\foreach \i in {0,1}{
\foreach \j in {0,1}{
\foreach \x in {0,1,2,3}{
    \ifnum \inteval{\i+\j}<2
        \node[vertex] at (\i,\j,\x) {};
    \fi
}}}
\end{tikzpicture}
\qquad\qquad %
\begin{tikzpicture}[vertex/.style={circle,draw,fill=black,inner sep=1.5pt, black}, /tikz/xscale=\s, /tikz/yscale=\s, 3d view={-27}{15}, bline/.style={line width=0.5pt, double=\gcol, double distance=0.25pt}, line/.style={line width=0.5pt, DarkRed, double=\gcol, double distance=0.25pt}, lline/.style={line width=0.5pt, DarkBlue, double=\gcol, double distance=0.25pt}]
%%
% Lexicographic
%%

% back (strong)
\foreach \i in {0,1}{
\foreach \j in {0,1}{
\foreach \x in {0,1,2,3}{
\foreach \l in {0,1}{
\foreach \k in {0,1}{
\foreach \y in {0,1,2,3}{

    \ifnum \inteval{\i+\j}>0
    \ifnum \inteval{\k+\l}>0
    
    \ifnum \x=\y
        \ifnum \inteval{(\i-\l)*(\i-\l)+(\j-\k)*(\j-\k)}=1
            \draw[bline] (\i,\j,\x) to (\l,\k,\y);
        \fi
    \fi

    \ifnum \inteval{(\x-\y)*(\x-\y)}=1
        \ifnum \i=\l
            \ifnum \j=\k
                \draw[bline] (\i,\j,\x) to (\l,\k,\y);
            \fi
        \fi
    \fi

    \ifnum \inteval{(\x-\y)*(\x-\y)}=1
        \ifnum \inteval{(\i-\l)*(\i-\l)+(\j-\k)*(\j-\k)}=1
            \draw[line, opacity=0.2] (\i,\j,\x) to (\l,\k,\y);
        \fi
    \fi

    \fi
    \fi
}}}
}}}

% inside (lexicographic)
\foreach \i in {0,1}{
\foreach \j in {0,1}{
\foreach \x in {0,1,2,3}{
\foreach \l in {0,1}{
\foreach \k in {0,1}{
\foreach \y in {0,1,2,3}{
    \ifnum \inteval{(\x-\y)*(\x-\y)}=1
        \ifnum \inteval{(\i-\l)*(\i-\l)+(\j-\k)*(\j-\k)}>1
            \draw[lline, opacity=0.6] (\i,\j,\x) to (\l,\k,\y);
        \fi
    \fi
    
}}}
}}}

\foreach \i in {0,1}{
\foreach \j in {0,1}{
\foreach \x in {0,1,2,3}{
    \ifnum \inteval{\i+\j}>0
        \node[vertex, inner sep=1.3pt] at (\i,\j,\x) {};
    \fi
}}}

% front

\foreach \i in {0,1}{
\foreach \j in {0,1}{
\foreach \x in {0,1,2,3}{
\foreach \l in {0,1}{
\foreach \k in {0,1}{
\foreach \y in {0,1,2,3}{

    \ifnum \inteval{\i+\j}<2
    \ifnum \inteval{\k+\l}<2
    
    \ifnum \x=\y
        \ifnum \inteval{(\i-\l)*(\i-\l)+(\j-\k)*(\j-\k)}=1
            \draw[bline] (\i,\j,\x) to (\l,\k,\y);
        \fi
    \fi

    \ifnum \inteval{(\x-\y)*(\x-\y)}=1
        \ifnum \i=\l
            \ifnum \j=\k
                \draw[bline] (\i,\j,\x) to (\l,\k,\y);
            \fi
        \fi
    \fi

    \ifnum \inteval{(\x-\y)*(\x-\y)}=1
        \ifnum \inteval{(\i-\l)*(\i-\l)+(\j-\k)*(\j-\k)}=1
            \draw[line] (\i,\j,\x) to (\l,\k,\y);
        \fi
    \fi

    \fi
    \fi
}}}
}}}

\foreach \i in {0,1}{
\foreach \j in {0,1}{
\foreach \x in {0,1,2,3}{
    \ifnum \inteval{\i+\j}<2
        \node[vertex] at (\i,\j,\x) {};
    \fi
}}}
\end{tikzpicture}

\begin{tikzpicture}[vertex/.style={circle,draw,fill=black,inner sep=1.5pt, black}, /tikz/xscale=\s, /tikz/yscale=\s]

%%
% Labels
%%

% label
\node at (-2.85,0) {$P_4\cprod C_4$};
\node at (0,0) {$P_4\boxtimes C_4$};
\node at (2.85,0) {$P_4\lex C_4$};
% \node at (-2.75,0) {$P_4\cprod C_4$};
% \node at (0,0) {$P_4\boxtimes C_4$};
% \node at (2.7,0) {$P_4\lex C_4$};
\end{tikzpicture}

\caption{Examples of $P_4$ times $C_4$ using different products. Notice that $P_4\boxtimes C_4$ is $P_4\cprod C_4$ plus the red edges, and $P_4\lex C_4$ is $P_4\boxtimes C_4$ plus the blue edges.
%  The colours highlight the different edges appearing in each product.
}
\label{fig:product-examples}
\end{figure}

Our next result, proven in \cref{sec:treewidth-of-product-graphs}, is a lower bound on the treewidth of a Cartesian product:

\begin{theorem}
\label{thm:main-cartesian-product}
For all graphs $G$ and $H$, $\tw(G\cprod H) \geq \frac{1}{2}(\tw(G)+1)(\tw(H)+1)-1$.
\end{theorem}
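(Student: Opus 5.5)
We will derive \cref{thm:main-cartesian-product} from \cref{thm:main-strong-product} by comparing $G\cprod H$ with $G\boxtimes H$. Concretely, we will show
\[
\tw(G\boxtimes H)+1\leq 2\bigl(\tw(G\cprod H)+1\bigr).
\]
Combined with \cref{thm:main-strong-product}, this gives $2(\tw(G\cprod H)+1)\geq (\tw(G)+1)(\tw(H)+1)$, which is the claimed bound.

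The main step is to turn a tree-decomposition of $G\cprod H$ into one of $G\boxtimes H$ while at most doubling bag sizes. The natural attempt, replacing each vertex $(g,h)$ by $\{(g,h),(g',h)\}$, needs a pairing we do not have. The route we will actually take goes through brambles.

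Let $k=\tw(G)+1$ and $\ell=\tw(H)+1$. By Seymour--Thomas duality, $G$ has a bramble $\mathcal{B}$ of order $k$ and $H$ has a bramble $\mathcal{C}$ of order $\ell$. In $G\cprod H$ we consider the family of sets
\[
X(B,C)=(B\times V(H))\cap(V(G)\times C)\;\cup\;\dots,
\]
and we will pick the version that is connected in $G\cprod H$. The cleanest choice is the ``cross'' $B\times\{h\}\cup\{g\}\times C$ with $g\in B$ and $h\in C$, or alternatively the full rectangle $B\times C$. The rectangle $B\times C$ is connected in $G\cprod H$ (connected Cartesian factors give a connected product), and two rectangles $B\times C$ and $B'\times C'$ touch, since $B,B'$ touch and $C,C'$ touch. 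So $\{B\times C\}$ is a bramble of $G\cprod H$.

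It remains to bound the order of this bramble from below. Let $S$ be a hitting set. For each $h$, the slice $\{g:(g,h)\in S\}$ either hits all of $\mathcal{B}$ or misses some $B_h$. The analogous statement holds for each $g$ and $\mathcal{C}$. We want to show $|S|\geq k\ell/2$. If fewer than $\ell$ rows $h$ have slices hitting $\mathcal{B}$, then those rows form a set not hitting some $C$. A counting argument over row-slices and column-slices, using that each slice missing a bramble element forces hits elsewhere, should give $|S|\geq \tfrac12 k\ell$. The loss of $\tfrac12$ arises because the Cartesian product, unlike the strong product, does not let us absorb diagonal touching. We expect that this counting step, and getting exactly the constant $\tfrac12$, is the main obstacle.

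As a fallback, we will prove the bound through the strong-product comparison. Take a tree-decomposition $(T,\beta)$ of $G\cprod H$ and fix a proper-ish orientation. Replace each bag entry $(g,h)$ by $(g,h)$ together with its ``diagonal partner'', obtained via a perfect matching structure on the $C_4$'s. For any strong edge $(g,h)(g',h')$, the vertex $(g,h')$ is a common Cartesian neighbour of both endpoints. We will try to show that adding $(g,h')$-type witnesses to bags covers the diagonal edges at a cost of a factor $2$. Since \cref{thm:main-strong-product} is the intended tool, we expect the author's proof to follow one of these two routes, with the bramble-counting route being the more robust one.
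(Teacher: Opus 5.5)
There is a genuine gap: neither of your two routes goes through. The inequality $\tw(G\boxtimes H)+1\leq 2(\tw(G\cprod H)+1)$ is never proved. It would answer \cref{ques:strong-vs-cart} affirmatively with constant $2$, and the paper leaves that question open. Your ``diagonal partner'' fallback gives no mechanism for a factor of $2$. A vertex $(g,h)$ has $\deg_G(g)\deg_H(h)$ diagonal neighbours. The conversion at the end of the paper (orient $G$ with out-degree $\degen(G)$ and add $N^+_G(g)\times\set{h}$ to bags) costs a factor $\degen(G)+1$.

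The bramble route fails before the counting step, because $\set{B\times C : B\in\mathcal{B}, C\in\mathcal{C}}$ is not a bramble in $G\cprod H$. Suppose $B\cap B'=\emptyset$ and $C\cap C'=\emptyset$. Then $B\times C$ and $B'\times C'$ are disjoint, and every edge of $G\cprod H$ changes only one coordinate, so no edge joins them. Singletons in $K_n\cprod K_n$ already show this. Touching of $B,B'$ and of $C,C'$ only yields a diagonal edge, which exists in $G\boxtimes H$ but not in $G\cprod H$.

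Even if you repair touching by making $\mathcal{B}$ strict, the order of a rectangle family is not multiplicative. In $K_{2k-1}$ the $k$-subsets form a strict bramble of order $k$. Yet the diagonal $\set{(i,i)}$, of size only $2k-1$, meets every $A\times C$ with $|A|=|C|=k$, because $A\cap C\neq\emptyset$. So the count you hope for (``should give $|S|\geq\tfrac12 k\ell$'') is not available for product families.

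The missing ideas are these.
\begin{itemize}
\item The $\tfrac12$ is not a counting loss; it comes from strict brambles. Kozawa, Otachi, and Yamazaki showed $\sbn(G)\geq\tfrac12(\tw(G)+1)$. For a strict bramble $\sS$ of $G$ and any $uv\in E(H)$, the copies $A\times\set{u}$ and $A'\times\set{v}$ touch via $(w,u)(w,v)$ with $w\in A\cap A'$. Hence $\sS_u\cup\sS_v$ is a bramble in $G\cprod H$.
\item The paper never builds a bramble in $H$ or a product bramble. It takes a minimum-width tree-decomposition $(T,\beta)$ of $G\cprod H$ and sets $\beta'(t)=\set{v\in V(H):\beta(t)\text{ hits }\sS_v}$. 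By \cref{fact:trace-of-connected-sets-td,fact:bag-bramble}, $(T,\beta')$ is a tree-decomposition of $H$, so some bag has $|\beta'(t)|\geq\tw(H)+1$.
\item Hitting sets for distinct $\sS_v$ lie in the disjoint layers $V(G)\times\set{v}$. Therefore $|\beta(t)|\geq\sbn(G)(\tw(H)+1)$, which is \cref{thm:main-cartesian-sbn-product} and implies the statement.
\end{itemize}
That layer-disjointness is exactly what makes the bound multiplicative, and it is what your rectangle family lacks.
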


The constant $\frac{1}{2}$ in \cref{thm:main-cartesian-product} is best possible since \citet{HARVEY2018157} showed that $\pw(K_m \cprod K_n) = (\frac{1}{2}+o(1))mn$ for $m\geq n$ (see \cite{LUCENA20071055} for the $m=n$ case).

A key tool in the proof of \cref{thm:main-strong-product} is the following dual notion to treewidth introduced by \citet{SEYMOUR199322} called the bramble number.
Two connected subsets $A$ and $B$ of $V(G)$ \define{touch} if $G[A\cup B]$ is connected. A \define{bramble} in $G$ is a collection $\sB$ of non-empty connected subsets of $V(G)$ that pairwise touch. A \define{hitting set} for $\sB$ is a set $S\subseteq V(G)$ that intersects every element of $\sB$. The \define{order} of $\sB$, denoted by \define{$\ord(\sB)$}, is the minimum size of a hitting set for $\sB$. The \define{bramble number} of $G$, denoted by \define{$\bn(G)$}, is the maximum order of a bramble in $G$.

\begin{theorem}[\cite{SEYMOUR199322}]
\label{thm:treewidth-duality}
For every graph $G$, $\tw(G)+1=\bn(G)$.
\end{theorem}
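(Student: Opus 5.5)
The proof has two directions. The inequality $\bn(G)\leq\tw(G)+1$ follows from a Helly-type argument on subtrees. The reverse inequality $\tw(G)+1\leq\bn(G)$ requires building a tree-decomposition, and I would follow the short proof of Bellenbaum and Diestel.

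\emph{Easy direction.} Fix a tree-decomposition $(T,\beta)$ of $G$ of minimum width and a bramble $\sB$.
\begin{enumerate}
\item For each $B\in\sB$, let $T_B=\set{t\in V(T): \beta(t)\cap B\neq\emptyset}$. Since $B$ is connected, the vertex-property and edge-property make $T_B$ a non-empty subtree of $T$.
\item Take $B,B'\in\sB$. If they share a vertex, then $T_B\cap T_{B'}\neq\emptyset$. Otherwise they touch, so some edge $uv$ has $u\in B$ and $v\in B'$. The edge-property then gives a bag containing both $u$ and $v$, so again $T_B\cap T_{B'}\neq\emptyset$.
\item By the Helly property for subtrees of a tree, pairwise intersecting subtrees have a common node $t$. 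Then $\beta(t)$ is a hitting set for $\sB$.
\item Hence $\ord(\sB)\leq|\beta(t)|\leq\tw(G)+1$. Maximising over $\sB$ gives $\bn(G)\leq\tw(G)+1$.
\end{enumerate}

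\emph{Hard direction.} Let $k=\bn(G)$. Call a tree-decomposition \emph{$\sB$-admissible} if every bag of size greater than $k$ fails to be a hitting set for $\sB$. I would prove that every bramble $\sB$ admits a $\sB$-admissible tree-decomposition. Applied to the empty bramble, every set is vacuously a hitting set, so no bag can have more than $k$ vertices. This gives $\tw(G)\leq k-1$.

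The claim is proved by downward induction on $|\sB|$, which is finite since there are finitely many connected sets.
\begin{enumerate}
\item Given $\sB$, choose a minimum hitting set $X$ for $\sB$. Then $|X|=\ord(\sB)\leq k$, so a bag equal to $X$ is always allowed.
\item Consider each component $C$ of $G-X$. If $C$ is disjoint from some element of $\sB$, or more generally some $B\in\sB$ fails to touch $C$, then one argues directly that $G[C\cup X]$ is handled. The interesting case is when $C$ touches every element of $\sB$ but $C\notin\sB$. Then $\sB\cup\set{C}$ is a strictly larger bramble. By induction it has an admissible tree-decomposition $(T_C,\beta_C)$.
\item Every hitting set of $\sB\cup\set{C}$ hits $\sB$ and so has size at least $|X|$. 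Menger's theorem then gives $|X|$ disjoint $X$--$C'$ paths inside $G[C\cup X]$, where $C'$ is a suitable set meeting the relevant bramble elements.
\item Using these paths, transform $(T_C,\beta_C)$ into a tree-decomposition of $G[C\cup X]$ that has $X$ in one bag. For this, replace each bag $\beta(t)$ by $(\beta(t)\cap C)\cup\set{x\in X:\text{the path through }x\text{ meets }\beta(t)}$, a projection along the disjoint paths.
\item Finally, glue all these decompositions at a central node with bag $X$.
\end{enumerate}

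\emph{Main obstacle.} The central difficulty is step 4 of the hard direction. One must show that the projected bags are still a tree-decomposition, with the vertex- and edge-properties preserved along paths, and that they are no larger than the original bags. Most importantly, a large projected bag must still miss some element of $\sB$, so that admissibility for $\sB\cup\set{C}$ transfers to admissibility for $\sB$. The minimality of $|X|$ is what drives this size and admissibility comparison: if a projected bag were larger or hit all of $\sB$, it would yield a hitting set smaller than $X$, or a larger bramble contradicting the choice of $X$.
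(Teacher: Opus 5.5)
The paper gives no proof of this statement. It quotes Seymour and Thomas and points to Bellenbaum--Diestel and Mazoit for short proofs, so the comparison is with the Bellenbaum--Diestel argument you chose to follow.

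\textbf{Easy direction.} This is correct.

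\textbf{Hard direction: the Menger step is misplaced.} Your overall scheme is right (admissible decompositions, downward induction on $|\sB|$, a minimum hitting set $X$, the bramble $\sB\cup\{C\}$, projection, gluing), but the step carrying the weight is wrong as written. The Menger paths do not live inside $G[C\cup X]$, and their target is not an unspecified $C'$.
\begin{itemize}
\item Take the $(\sB\cup\{C\})$-admissible decomposition $(T,\beta)$. If it is already $\sB$-admissible you are done.
\item Otherwise some bag $\beta(s)$ with $|\beta(s)|>k$ hits $\sB$, and admissibility forces $\beta(s)\cap C=\emptyset$.
\item Every $X$--$\beta(s)$ separator meets each (connected) $B\in\sB$. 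So minimality of $X$ and Menger give disjoint paths $P_x$ ($x\in X$) from $X$ to $\beta(s)$.
\item Since $N(C)\subseteq X$, and $P_x$ meets $X$ only in $x$ and ends outside $C$, we get $P_x\cap(X\cup C)=\{x\}$.
\end{itemize}
The last fact is what bounds projected bags by original ones: the disjoint paths inject the added vertices into $\beta(t)\setminus(X\cup C)$. It also makes the projection of $\beta(s)$ equal to $X$, which your gluing in step 5 requires. Separately, in step 2 the easy case is exactly that $C$ fails to touch some $B$, handled by the two bags $X$ and $N(C)\cup C$; being disjoint from some $B$ is not enough.

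\textbf{Hard direction: admissibility does not transfer under your formula.} Minimality of $|X|$ cannot give the transfer. A big projected bag hitting $\sB$ has more than $k\ge|X|$ vertices, so it contradicts nothing. The real mechanism is as follows. If $|\beta'(t)|>k$, then $\beta(t)$ is big and meets $C$, so it misses some $B\in\sB$. Since $B$ is connected and meets $\beta(s)$, its trace lies in the component $T_1$ of $T-t$ containing $s$. One must then ensure that no $x\in B\cap X$ enters $\beta'(t)$.

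Your formula, which adds $x$ whenever $P_x$ meets $\beta(t)$, fails this. Here is a counterexample with $k=4$.
\begin{itemize}
\item Let $G$ be the forest with edges $xa,xb,xf_1,f_1f_2,f_2f_3,f_3f_4$, plus isolated vertices $z_1,z_2,z_3$ and a disjoint $K_4$, so $\bn(G)=4$.
\item Take $\sB=\{\{x,b\}\}$, $X=\{x\}$ and $C=\{f_1,\dots,f_4\}$.
\item Use the path-decomposition with bags $\{a,f_1,\dots,f_4\}$, $\{x,a,b,f_1\}$, $\{a,b,z_1,z_2,z_3\}$, $V(K_4)$, in this order.
\end{itemize}
This decomposition is $(\sB\cup\{C\})$-admissible, the third bag is $\beta(s)$, and $P_x=xa$ is a legitimate $X$--$\beta(s)$ path. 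Your formula turns the first bag into $\{x,f_1,\dots,f_4\}$, which has $5>k$ vertices and hits $\{x,b\}$.

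\textbf{The repair.} Add $x$ only to the bags on the path in $T$ from $\{t':x\in\beta(t')\}$ to $s$.
\begin{itemize}
\item The trace of $P_x$ is connected and contains both ends of that path. Each node on the path outside $x$'s subtree therefore sees a vertex of $P_x$ outside $X\cup C$, so the injection still holds.
\item If $x\in B$, then $x$'s subtree and $s$ both lie in $T_1$. Hence that path avoids $t$, and $\beta'(t)\cap B=\emptyset$.
\end{itemize}
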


As an aside, \citet{BELLENBAUM_DIESTEL_2002} and \citet{mazoit2013simpleprooftreewidthduality} each gave a short proof of \cref{thm:treewidth-duality}.

Brambles are useful for lower bounding the treewidth. For example, recall that the $n\times n$-grid has treewidth $n$. The lower bound of $n$ follows from \cref{thm:treewidth-duality} by considering the bramble consisting of the top row, the last column minus the top vertex, and the crosses in the bottom-left $(n-1)\times (n-1)$-grid. 
A well-known generalisation of this lower bound says that $\tw(G\cprod H) \geq n$ for all connected graphs $G$ and $H$ with at least $n\geq 2$ vertices (see \cite{dujmovic2025gridminorsproducts} for a proof).
Wood \cite{WoodProductConnected} improved this bound by taking into account the connectivity of $G$ and $H$.\footnote{\cref{thm:Wood} is stated in \cite{WoodProductConnected} as $\tw(G\cprod H) \geq k(n-2k+2)-1$, however it is implicit in the proof that the strict bramble number of $G\cprod H$, which we define later, is at least $k(n-2k+2)$. Then \citep[Lemma 2.6]{AIDUN20201} implies $\tw(G\cprod H) \geq k(n-2k+2)$.}

\begin{theorem}[\cite{WoodProductConnected}]
\label{thm:Wood}
For all $k$-connected graphs $G$ and $H$ with at least $n\geq 2$ vertices,
\[\tw(G\cprod H) \geq k(n-2k+2).\]
\end{theorem}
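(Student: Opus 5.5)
We plan to construct a bramble in $G\cprod H$ of order at least $k(n-2k+2)$ and then apply \cref{thm:treewidth-duality}. Since the footnote says the intended bound is really strict bramble number $\geq k(n-2k+2)$, giving $\tw\geq k(n-2k+2)$ via \citep[Lemma 2.6]{AIDUN20201}, we aim for a \emph{strict} bramble: its elements pairwise intersect, not merely touch. This is the form that Lemma 2.6 converts into the stronger inequality, without the $-1$.

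The construction is a ``cross'' bramble generalising the grid example. We fix a set $A\subseteq V(G)$ of $k$ vertices and a set $B\subseteq V(H)$ of $k$ vertices, and set aside the remaining $n-k$ vertices of each factor; the natural index sets are $V(G)\setminus A$ and $V(H)\setminus B$, possibly trimmed further. For $g\in V(G)$ and $h\in V(H)$, the row $\{g\}\times V(H)$ and the column $V(G)\times\{h\}$ are connected copies of $H$ and $G$. A cross $X_{g,h}$ is their union, which is connected. Any two crosses $X_{g,h}$ and $X_{g',h'}$ intersect, since $(g,h')$ lies in both. So every family of crosses is a strict bramble, and the entire difficulty is in the order.

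A plain family of crosses only has order about $n$, because a single row hits every cross. To reach roughly $k\cdot n$ we use $k$-connectivity: instead of single crosses, each bramble element is a union of $k$ rows and $k$ columns tied together. By Menger's theorem we choose $k$ disjoint paths between suitable $k$-sets in each factor, and form an element from $k$ rows $\{g_i\}\times V(H)$ together with $k$ columns $V(G)\times\{h_j\}$, indexed so that any two elements still share a vertex.

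The order lower bound is the main obstacle. Let $S$ be a hitting set with $|S|<k(n-2k+2)$. By averaging, fewer than $k$ of the rows indexed by the $n-2k+2$ ``free'' vertices are hit heavily, and similarly for columns. Since $G$ and $H$ are $k$-connected, deleting fewer than $k$ vertices keeps them connected, so $S$ can block connectivity inside at most a few rows or columns. We then find $k$ untouched rows and $k$ untouched columns, linked through connectivity that survives the deletion of $S$; these form a bramble element that $S$ misses, which is a contradiction. The count $n-2k+2$ arises because $2k-2$ vertices are lost in the bookkeeping, namely $k-1$ from each of the two sides. Matching this exact constant is the delicate part of the argument.
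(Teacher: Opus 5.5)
Your overall plan matches how the paper uses Wood's result: build a strict bramble of order $k(n-2k+2)$, then apply Lemma 2.6 of Aidun et al. However, the construction you propose cannot reach that order. As written, each element is a union of $k$ full rows $\{g_i\}\times V(H)$ and $k$ full columns $V(G)\times\{h_j\}$. Every element contains a column, and every column meets every row. So a single row $\{g\}\times V(H)$ is a hitting set for the whole family, as is a single column, and the order is at most $\min\{|V(G)|,|V(H)|\}$. Enlarging the crosses only makes elements easier to hit, so this is no better than the plain crosses you already dismissed. Since $k(n-2k+2)-n=(k-1)(n-2k)$, it falls short whenever $k\geq 2$ and $|V(G)|=|V(H)|=n>2k$. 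Your order argument also does not match the construction. Averaging only gives at least $2k-1$ rows (and columns) containing at most $k-1$ vertices of $S$, not rows disjoint from $S$. So there are no ``untouched'' full rows from which to assemble an element that $S$ misses. The Menger paths do no work either, since unions of full rows and columns are already connected.

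The missing idea is to let elements omit up to $k-1$ vertices of each row or column, using $k$-connectivity only to keep such a damaged copy connected. For example, assume $n\geq 2k-1$ (otherwise the bound is trivial). For each $S$ with $|S|<k(n-2k+2)$, call a row or column \emph{light} if it contains at most $k-1$ vertices of $S$.
\begin{itemize}
\item Fewer than $n-2k+2$ of the at least $n$ rows are not light, so at least $2k-1$ rows are light; likewise for columns.
\item Each light copy minus $S$ is connected, by $k$-connectivity.
\item A light row crosses all but at most $k-1$ of the (at least $2k-1$) light columns at points outside $S$, which is more than half of them. So any two light rows share such a column, and symmetrically for columns.
\item Hence all light rows and columns minus $S$ lie in a single component $C(S)$ of $(G\cprod H)-S$.
\item Given two such sets $S,S'$, take $2k-1$ rows light for $S$ and $2k-1$ columns light for $S'$. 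At most $(2k-1)(k-1)$ of their $(2k-1)^2$ crossing points lie in $S$, and at most $(2k-1)(k-1)$ lie in $S'$. So some crossing point avoids $S\cup S'$ and lies in $C(S)\cap C(S')$.
\end{itemize}
Thus $\{C(S)\}$ is a strict bramble. Every $S$ with $|S|<k(n-2k+2)$ misses $C(S)$, so its order is at least $k(n-2k+2)$. This is where $n-2k+2$ genuinely comes from: you need $2k-1$ light copies on each side so that both majority counts go through.
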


Qualitatively, the bound in \cref{thm:Wood} is of a very different character from the bounds in \cref{thm:main-strong-product,thm:main-cartesian-product}, which are solely in terms of treewidth. This is because treewidth is a global structural parameter, unlike connectivity, and the number of vertices in a graph can be much larger than its treewidth. Quantitatively, the bound in \cref{thm:Wood} is better 
when the factors have treewidth comparable to their connectivity. On the other hand, \cref{thm:main-strong-product,thm:main-cartesian-product} are superior when the factors have large treewidth and small connectivity, such as random $d$-regular graphs.
An important consequence of \cref{thm:main-strong-product,thm:main-cartesian-product}, which is made precise in \cref{sec:vertex-expansion}, is that products of expanders have large subgraphs that are expanders.

Since the treewidth of a graph is at least its connectivity \cite[Exercise 12.26]{Diestel}, \cref{thm:Wood}
inspires the question: Can the bound $\tw(G\cprod H) \in \Omega(\tw(G)\tw(H))$ in \cref{thm:main-cartesian-product} be improved to $\tw(G\cprod H) \in \Omega(\tw(G)|V(H)|)$?
The answer is `no' since Hickingbotham and Wood \citep[Proposition 21]{HickingbothamStructuralPropertiesofGraphProducts} showed that for all $n\geq k+1$, there exists an $n$-vertex treewidth $k$ graph $G_{k,n}$ such that $\tw(G_{k,n}\boxtimes G_{k,n}) \in \mathcal{O}(n+k^2)$.

On a related note, \citet{HickingbothamStructuralPropertiesofGraphProducts} asked whether $\tw(G\boxtimes H)\in \mathcal{O}(\tw(G\cprod H))$? 
Our next theorem shows that the analogous relationship fails for $\lex$ and $\boxtimes$,
that is $\tw(G\lex H) \not\in \mathcal{O}(\tw(G\boxtimes H))$. More strongly, in \cref{sec:strong-product-vs-lexicographic-product} we show that $\tw(G\boxtimes H)$ and $\tw(G\lex H)$ are not tied.

\begin{theorem}
\label{thm:main-strong-vs-lexicographic}
There is no function $f:\mathbb{N}\rightarrow \mathbb{N}$ such that $\tw(G\lex H) \leq f(\tw(G\boxtimes H))$ for all graphs $G$ and $H$.
\end{theorem}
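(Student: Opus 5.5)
The plan is to exhibit an explicit family of pairs $(G,H)$ for which $\tw(G\boxtimes H)$ stays bounded while $\tw(G\lex H)$ grows without bound. The key observation is that the two products differ exactly on pairs $(g_1,h_1),(g_2,h_2)$ with $g_1g_2\in E(G)$ and $h_1\neq h_2$, $h_1h_2\notin E(H)$. Such pairs are edges of $G\lex H$ but not of $G\boxtimes H$. So I would take $H$ to have as few edges as possible, namely the edgeless graph $\overline{K_n}$ on $n$ vertices, and $G=K_2$.

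First I compute $G\boxtimes H$. Since $H$ has no edges, the only edges of $K_2\boxtimes\overline{K_n}$ are those with $g_1g_2\in E(K_2)$ and $h_1=h_2$. Hence $K_2\boxtimes\overline{K_n}$ is a perfect matching, that is, $n$ disjoint copies of $K_2$. Its treewidth is $1$: a tree-decomposition is obtained from a path of $n$ nodes, each bag being one matching edge. This holds for every $n$.

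Next I compute $G\lex H$. In $K_2\lex\overline{K_n}$, two vertices $(g_1,h_1),(g_2,h_2)$ are adjacent precisely when $g_1\neq g_2$, since $H$ contributes no edges within a fibre. So $K_2\lex\overline{K_n}\cong K_{n,n}$. For the lower bound $\tw(K_{n,n})\geq n$, I use the standard fact that every graph of treewidth $k$ has a vertex of degree at most $k$. This follows by taking a leaf bag of a minimal-width tree-decomposition that is not contained in its neighbour's bag: some vertex appears only in that bag, so all its neighbours lie in that bag. Since $K_{n,n}$ has minimum degree $n$, we get $\tw(K_{n,n})\geq n$. Alternatively, one can build a bramble of order $n$ and apply \cref{thm:treewidth-duality}.

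Finally, suppose some $f$ satisfied $\tw(G\lex H)\leq f(\tw(G\boxtimes H))$ for all graphs. Then $n\leq\tw(K_2\lex\overline{K_n})\leq f(1)$ for every $n$, which fails for $n=f(1)+1$. There is no real obstacle. The only care needed is checking the product definitions when $H$ has no edges: the $\boxtimes$ edges require $h_1=h_2$ or $h_1h_2\in E(H)$, while the $\lex$ edges require only $g_1g_2\in E(G)$. If one preferred connected factors, one could instead take $G=K_2$ and $H=K_{1,n}$. Then $G\boxtimes H$ has treewidth at most $3$: deleting the two copies of the centre leaves a matching. Meanwhile $G\lex H$ contains $K_{n,n}$ on the leaf copies, so the same argument applies.
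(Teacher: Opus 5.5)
Your proof is correct, but it takes a different route from the paper.

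The paper takes $G=K_3$ and $H=P_n$. It proves a general bramble lemma (\cref{lem:lexicographic}): a bramble $\sB$ in $G$ whose elements all have at least two vertices lifts to a bramble of order $\ord(\sB)|V(H)|$ in $G\lex H$. Applying this to the strict bramble formed by the three edges of $K_3$ gives $\tw(K_3\lex P_n)\geq 2n-1$. The paper then bounds $\tw(K_3\boxtimes P_n)\leq 5$ by replacing each vertex in a width-$1$ tree-decomposition of $P_n$ with its three copies.

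You instead choose $H$ so that $G\lex H$ is exactly $K_{n,n}$ and $G\boxtimes H$ is a matching. You then need only the elementary bound $\tw(F)\geq\delta(F)$. Your justification of that bound is fine once you first discard any leaf bag contained in its neighbour's bag. Your argument is shorter and fully self-contained, with no brambles or duality theorem. It also pushes the strong-product side down to treewidth $1$, or to at most $3$ with connected factors via $K_{1,n}$.

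The paper's route costs more but yields a reusable tool. \cref{lem:lexicographic} gives $\tw(G\lex H)=(\tw(G)+1)|V(H)|-1$ whenever $G$ has a maximum-order bramble with no singletons. This shows in general that the treewidth of a lexicographic product scales with $|V(H)|$ rather than with $\tw(H)$, which is the phenomenon your $K_{n,n}$ example shows in its simplest form.
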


Guided by the role of brambles in our proof of \cref{thm:main-strong-product}, we next consider the strict bramble number.
A bramble in a graph $G$ is \define{strict} if its elements are pairwise intersecting. The \define{strict bramble number} of $G$, denoted by \define{$\sbn(G)$}, is the maximum order of a strict bramble in $G$.

The next two theorems give strict bramble analogues of our earlier product inequalities. The first is proven in \cref{sec:treewidth-of-product-graphs}:

\begin{theorem}
\label{thm:main-cartesian-sbn-product}
For all graphs $G$ and $H$, $\tw(G\cprod H) \geq \sbn(G)(\tw(H)+1)-1$.
\end{theorem}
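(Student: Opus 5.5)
The plan is to apply \cref{thm:treewidth-duality} to $G\cprod H$: I will build a bramble in $G\cprod H$ of order at least $\sbn(G)(\tw(H)+1)=\sbn(G)\bn(H)$. Let $\sS$ be a strict bramble in $G$ of order $s=\sbn(G)$, and let $\sB$ be a bramble in $H$ of order $b=\bn(H)=\tw(H)+1$. The obvious candidate elements $A\times X$ with $A\in\sS$ and $X\in\sB$ do pairwise touch. However, bounding the order of that family looks hard. The reason is that one would need a single $A$ that avoids every row of the hitting set along $X$ simultaneously. So instead I would let the $G$-part vary with the $H$-coordinate.

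\textbf{Construction.} For each $X\in\sB$ and each function $\phi:X\to\sS$, define
\[C(X,\phi)=\bigcup_{h\in X}\phi(h)\times\{h\}.\]
Let $\mathscr{C}$ be the collection of all such sets.

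\textbf{Connectivity.} Each slice $\phi(h)\times\{h\}$ is connected, since $\phi(h)$ is connected in $G$. For an edge $hh'$ of $H[X]$, the sets $\phi(h)$ and $\phi(h')$ intersect because $\sS$ is strict. If $g$ is a common vertex, then $(g,h)(g,h')$ is an edge of $G\cprod H$. Since $X$ is connected, $C(X,\phi)$ is connected.

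\textbf{Touching.} Take $C(X,\phi)$ and $C(X',\phi')$. Since $X$ and $X'$ touch in $H$, there are two cases.
\begin{itemize}[label=]
\item If some $h\in X\cap X'$, then $\phi(h)\cap\phi'(h)\neq\emptyset$ by strictness, so the two sets intersect.
\item Otherwise there is an edge $hh'$ with $h\in X$ and $h'\in X'$. Picking $g\in\phi(h)\cap\phi'(h')$ gives the edge $(g,h)(g,h')$ between the two sets.
\end{itemize}
Hence $\mathscr{C}$ is a bramble. This step is exactly where strictness of $\sS$ is used. An ordinary bramble would only give touching, and touching is not preserved when the $H$-coordinate changes in a Cartesian product.

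\textbf{Order.} Let $T\subseteq V(G\cprod H)$ with $|T|<sb$, and write $T_h=\{g:(g,h)\in T\}$. Let $Y=\{h:|T_h|\geq s\}$. Then $|Y|\leq |T|/s<b$, so $Y$ is not a hitting set for $\sB$. Hence some $X\in\sB$ is disjoint from $Y$, which means $|T_h|<s$ for every $h\in X$. Since $\ord(\sS)=s$, for each such $h$ the set $T_h$ misses some $A_h\in\sS$. Set $\phi(h)=A_h$. Then $C(X,\phi)\cap T=\emptyset$, so $T$ is not a hitting set. Therefore $\ord(\mathscr{C})\geq sb$, and \cref{thm:treewidth-duality} gives $\tw(G\cprod H)\geq sb-1=\sbn(G)(\tw(H)+1)-1$.

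The main obstacle is this order bound. The freedom to choose $\phi$ pointwise is what removes the need for one common $A$; a pure counting or averaging argument over the product family would instead require fractional control of $\sS$.
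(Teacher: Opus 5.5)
Your proof is correct, but it works on the opposite side of the duality from the paper's proof.

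The paper never constructs a bramble in $G\cprod H$. It fixes an optimal tree-decomposition $(T,\beta)$ of $G\cprod H$ and lets $\beta'(t)$ be the set of $v\in V(H)$ for which $\beta(t)$ hits the row copy $\sS_v=\set{A\times\set{v}:A\in\sS}$. It then uses \cref{fact:bag-bramble,fact:trace-of-connected-sets-td} to show that $(T,\beta')$ is a tree-decomposition of $H$; strictness enters only to make $\sS_u\cup\sS_v$ a bramble for $uv\in E(H)$. Finally it counts $|\beta(t)|\geq\sbn(G)\,|\beta'(t)|$ at a largest bag of $\beta'$. That argument uses only the easy direction of duality (in the form of \cref{fact:bag-bramble}) and needs no bramble in $H$. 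Since the same tree $T$ indexes both decompositions, it also gives \cref{cor:pathwidth-cartesian-product} verbatim when started from a path-decomposition.

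Your argument needs the hard direction of \cref{thm:treewidth-duality} to obtain $\sB$ in $H$, and it does not directly give the pathwidth corollary. In return it produces an explicit obstruction, showing $\bn(G\cprod H)\geq\sbn(G)\bn(H)$, and it generalises with no extra work in two directions:
\begin{itemize}
\item If $\sB$ is a maximum-order strict bramble of $H$, then any $X,X'\in\sB$ share a vertex. Your first touching case then shows that $C(X,\phi)$ and $C(X',\phi')$ actually intersect, so $\mathscr{C}$ is a strict bramble of order at least $\sbn(G)\sbn(H)$. This proves \cref{thm:main-sbn} without the Lardas et al.\ duality (\cref{thm:sbn-duality}) that the paper relies on.
\item If $\sS$ is an ordinary bramble of $G$ and you work in $G\boxtimes H$, then touching slices in adjacent rows are linked by a diagonal edge. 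The same order count then gives \cref{thm:main-strong-product}.
\end{itemize}
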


\cref{thm:main-cartesian-sbn-product} strengthens the bound $\tw(G\cprod H) \geq \sbn(G)(\had(H)+1)-1$ by \citet*{KOZAWA2014251}, who introduced the notion of strict brambles under the name ``PI family''.\footnote{The name ``strict bramble'' is by \citet*{AIDUN20201}} They also showed that strict bramble number is tied to treewidth, in particular, $\sbn(G) \geq \frac{1}{2}(\tw(G)+1)$. Therefore \cref{thm:main-cartesian-sbn-product} implies \cref{thm:main-cartesian-product}.

A duality result (\cref{thm:sbn-duality} below) for strict brambles analogous to \cref{thm:treewidth-duality} was proved by \citet{lardas2023strict}. In \cref{sec:strict-bramble-number-of-cartesian-products}, we use their duality result to obtain the following lower bound on the strict bramble number of Cartesian products:

\begin{theorem}
\label{thm:main-sbn}
For all graphs $G$ and $H$, $\sbn(G\cprod H) \geq \sbn(G)\sbn(H)$.
\end{theorem}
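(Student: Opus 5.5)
The plan is to pass from strict brambles to an equivalent ``strict haven'' formulation, and then build a haven in $G\cprod H$ row by row from havens in $G$ and $H$.

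\textbf{Step 1: the haven reformulation.} I would prove: $\sbn(G)\geq k$ if and only if there is a function $\beta$ that assigns to each $S\subseteq V(G)$ with $|S|<k$ a component $\beta(S)$ of $G-S$, such that $\beta(S)\cap\beta(S')\neq\emptyset$ for all such $S,S'$.
\begin{itemize}
\item Forward direction: take a strict bramble $\sB$ of order at least $k$. If $|S|<k$, then $S$ misses some element of $\sB$. Every element of $\sB$ missed by $S$ is connected, so it lies inside a single component of $G-S$. These missed elements pairwise intersect, so they all lie in the same component, and I set $\beta(S)$ to be that component. Two components $\beta(S)$ and $\beta(S')$ contain elements $A$ and $A'$ of $\sB$, and $A\cap A'\neq\emptyset$, so $\beta(S)\cap\beta(S')\neq\emptyset$.
\item Reverse direction: $\set{\beta(S):|S|<k}$ is a family of connected, pairwise intersecting sets, so it is a strict bramble. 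Any set $X$ with $|X|<k$ misses $\beta(X)$, so this strict bramble has order at least $k$.
\end{itemize}
This is presumably the content of the duality of \citet{lardas2023strict}, and I would invoke it if it is stated in this form; otherwise the short argument above suffices.

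\textbf{Step 2: the construction.} Let $a=\sbn(G)$ and $b=\sbn(H)$, with strict havens $\beta_G$ of order $a$ and $\beta_H$ of order $b$ from Step 1. Fix $S\subseteq V(G\cprod H)$ with $|S|<ab$. For each $h\in V(H)$ write $S_h=\set{g:(g,h)\in S}$, and let $T=\set{h:|S_h|\geq a}$. Since the sets $S_h$ are disjoint slices of $S$, $a|T|\leq|S|<ab$, so $|T|<b$. Hence $C:=\beta_H(T)$ is defined, and $|S_h|<a$ for every $h\in C$. Consider
\[
  U \;=\; \bigcup_{h\in C}\beta_G(S_h)\times\set{h}.
\]
\begin{itemize}
\item $U$ avoids $S$, because each $\beta_G(S_h)$ avoids $S_h$.
\item Each row $\beta_G(S_h)\times\set{h}$ is connected, since it is a copy of a connected subgraph of $G$.
\item Rows indexed by adjacent $h,h'\in C$ are joined. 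The sets $\beta_G(S_h)$ and $\beta_G(S_{h'})$ share some vertex $g$, and then $(g,h)(g,h')$ is an edge of $G\cprod H$.
\item Since $C$ is connected in $H$, these facts make $U$ connected.
\end{itemize}
I then define $\beta(S)$ to be the component of $(G\cprod H)-S$ that contains $U$.

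\textbf{Step 3: the intersection property.} Take any $S,S'$ of size less than $ab$, with associated sets $T,T'$. Then $\beta_H(T)\cap\beta_H(T')$ contains some vertex $h$. At that $h$, the sets $\beta_G(S_h)$ and $\beta_G(S'_h)$ share some vertex $g$. So $(g,h)$ lies in both $\beta(S)$ and $\beta(S')$. By Step 1, $\beta$ is a strict haven of order $ab$, and therefore $\sbn(G\cprod H)\geq ab$.

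\textbf{Main obstacle.} The crux is Step 1. The naive bramble $\set{A\times B : A\in\sB_G,\ B\in\sB_H}$ is strict, but bounding its order directly runs into a quantifier problem: each row $h$ of $B$ may miss a different $A$, and there is no obvious single $A$ missed by all of them. The haven viewpoint avoids this, because it lets the chosen component in $G$ depend on the row $h$.
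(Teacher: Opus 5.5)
Your proof is correct, and it takes a genuinely different route from the paper.

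The paper works on the decomposition side:
\begin{enumerate}
\item it takes a minimum-width lenient tree-decomposition $(T,\ell)$ of $G\cprod H$, whose width is $\sbn(G\cprod H)$ by the hard direction of \cref{thm:sbn-duality};
\item it sets $\ell'(t)$ to be the set of $v\in V(H)$ for which $\ell(t)$ hits the copy $\sS_v$ of a maximum strict bramble of $G$;
\item it shows that $(T,\ell')$ is a lenient tree-decomposition of $H$;
\item it counts inside a bag with $|\ell'(t)|\geq\sbn(H)$.
\end{enumerate}

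You work entirely on the obstruction side and build an explicit strict bramble of order $\sbn(G)\sbn(H)$ in $G\cprod H$. The key idea is the row-dependent choice: first $\beta_H(T)$ for the set $T$ of heavy rows, then $\beta_G(S_h)$ separately in each light row $h$. This is exactly what resolves the quantifier problem you point out.

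Your route is self-contained and elementary, and needs no duality theorem at all. With ``touch'' in place of ``intersect'', and using the diagonal edges, the same construction gives $\bn(G\boxtimes H)\geq\bn(G)\bn(H)$ directly.

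The paper's route, in exchange, is a single template shared with \cref{thm:main-strong-product,thm:main-cartesian-sbn-product}. Because it transfers a decomposition of the product to one of $H$ on the same tree, it adapts immediately to path-decompositions. This is how the paper obtains \cref{cor:pathwidth-cartesian-product}, which an obstruction-side construction like yours does not readily give.

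Two small remarks.
\begin{enumerate}
\item Your Step~1 is an elementary reformulation, not the content of the Lardas et al.\ duality, which is a min--max theorem about lenient tree-decompositions.
\item Step~1 is not even needed. The sets $U_S$ are already nonempty, connected and pairwise intersecting, with $U_X\cap X=\emptyset$ whenever $|X|<ab$. So $\set{U_S : |S|<ab}$ is itself a strict bramble of order at least $ab$. You could also build $U_S$ from elements of the original strict brambles missed by $T$ and by each $S_h$, rather than from components.
\end{enumerate}
So the real crux of your argument is Step~2, not Step~1.
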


We now highlight parallels between \cref{thm:main-sbn,thm:main-strong-product}. By \cref{thm:treewidth-duality}, we may rewrite \cref{thm:main-strong-product} as $\bn(G\boxtimes H) \geq \bn(G)\bn(H)$. Therefore \cref{thm:main-strong-product,thm:main-sbn} motivate viewing $(\boxtimes, \bn)$ and $(\cprod, \sbn)$ as natural `product-parameter pairings', in the sense that the parameter respects the product via a multiplicative lower bound. \cref{thm:main-sbn} pertains to the more restrictive product, which necessitates a more restrictive parameter. Indeed, the proofs of \cref{thm:main-strong-product,thm:main-sbn} are nearly identical, differing only in the modifications required to accommodate the more restrictive setting. The pairing is also supported by the optimality of \cref{thm:main-cartesian-product}, specifically, if $\bn$ is used in place of $\sbn$ in \cref{thm:main-sbn}, then a factor of $\frac{1}{2}$ must be introduced.

\section{Treewidth of Product Graphs}
\label{sec:treewidth-of-product-graphs}

This section proves \cref{thm:main-strong-product,thm:main-cartesian-product,thm:main-cartesian-sbn-product} (restated below).

Recall that in \cref{sec:intro} it was explained that \cref{thm:main-cartesian-product} is a consequence of \cref{thm:main-cartesian-sbn-product}. As such, we only prove \cref{thm:main-strong-product,thm:main-cartesian-sbn-product}.

Let $G$ be a graph and let $(T,\beta)$ be a tree-decomposition of $G$. For each set $X\subseteq V(G)$, let $\mathdefine{(T,\beta)\angle{X}}\coloneqq\set{t\in V(T) : X\cap \beta(t)\not=\emptyset}$.

The following are well-known facts about tree-decompositions and brambles (see \cite{Reed97} for proofs):

\begin{fact}
\label{fact:trace-of-connected-sets-td}
For every graph $G$ and every tree-decomposition $(T,\beta)$ of $G$, if $X\subseteq V(G)$ is connected in $G$, then $(T,\beta)\angle{X}$ is connected in $T$.
\end{fact}

\begin{fact}
\label{fact:bag-bramble}
For every bramble $\sB$ and every tree-decomposition of a graph $G$, some bag is a hitting set for $\sB$.
\end{fact}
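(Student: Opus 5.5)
The statement immediately above is \cref{fact:bag-bramble}: for every bramble $\sB$ in $G$ and every tree-decomposition $(T,\beta)$ of $G$, some bag is a hitting set for $\sB$. I would prove this with the standard orientation argument on the tree. By \cref{fact:trace-of-connected-sets-td}, each $B\in\sB$ determines a non-empty subtree $T_B\coloneqq(T,\beta)\angle{B}$ of $T$. We must find a node $t$ with $t\in T_B$ for every $B\in\sB$. So suppose, for contradiction, that no such node exists, and for each node $t$ pick some $B_t\in\sB$ with $B_t\cap\beta(t)=\emptyset$.

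\textbf{Step 1 (orienting edges).} Fix $t$. Since $t\notin T_{B_t}$ and $T_{B_t}$ is connected, $T_{B_t}$ lies entirely in one component of $T-t$. Let $t'$ be the neighbour of $t$ in that component, and orient the edge $tt'$ away from $t$. Each node gets exactly one outgoing arc, and $T$ has $|V(T)|-1$ edges. Hence some edge $tt'$ is chosen from both ends, with $t$ pointing to $t'$ and $t'$ pointing to $t$. (If $T$ has a single node, its bag is $V(G)$, which hits every non-empty $B$, so we may assume $|V(T)|\ge 2$.)

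\textbf{Step 2 (touching sets are separated).} Deleting the edge $tt'$ splits $T$ into a component $T_1\ni t$ and a component $T_2\ni t'$. By construction, $T_{B_t}\subseteq V(T_2)\setminus\{t'\}$. Indeed it lies in the component of $T-t$ containing $t'$, which is $T_2$, and it avoids $t'$ because otherwise $t'$ would not be oriented back towards $t$. More precisely, $B_{t'}$ avoids $\beta(t')$ and $T_{B_{t'}}\subseteq V(T_1)$. Also $T_{B_t}$ avoids $t$, so the relevant fact is simply $T_{B_t}\subseteq V(T_2)$ and $T_{B_{t'}}\subseteq V(T_1)$.

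\textbf{Step 3 (contradiction).} The sets $B_t$ and $B_{t'}$ touch, so either they share a vertex or some edge $uv\in E(G)$ has $u\in B_t$ and $v\in B_{t'}$.
\begin{itemize}
\item A shared vertex $v$ would lie in bags on both sides of $tt'$. By the vertex-property, $v$ would then lie in $\beta(t)$ or $\beta(t')$. But $v\in B_t$ avoids $\beta(t)$, and $v\in B_{t'}$ avoids $\beta(t')$, a contradiction.
\item For an edge $uv$, the edge-property gives a node containing both $u$ and $v$. That node lies in $T_{B_t}\cap T_{B_{t'}}\subseteq V(T_2)\cap V(T_1)=\emptyset$, a contradiction.
\end{itemize}

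The only delicate point is Step 2, namely checking carefully that the two subtrees lie on opposite sides of the edge $tt'$; everything else is routine.
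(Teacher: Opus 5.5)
Your proof is correct and is essentially the standard argument; the paper gives no proof and simply cites Reed. Step 3 shows that touching elements of $\sB$ have intersecting traces in $T$, and the pigeonhole orientation in Steps 1--2 is an inline proof of the Helly property for subtrees of a tree. One blemish: delete the aside in Step 2 that $T_{B_t}$ avoids $t'$. The orientation at $t'$ is determined by $B_{t'}$, not $B_t$, so it does not justify that claim, and, as you then note, only $T_{B_t}\subseteq V(T_2)$ and $T_{B_{t'}}\subseteq V(T_1)$ are needed.
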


We now prove \cref{thm:main-strong-product}, which states that for all graphs $G$ and $H$
\[\tw(G\boxtimes H) \geq (\tw(G)+1)(\tw(H)+1)-1.\]

\begin{proof}[Proof of \cref{thm:main-strong-product}]
Let \define{$(T, \beta)$} be a minimum-width tree-decomposition of $G\boxtimes H$ and let \define{$\sB$} be a maximum-order bramble in $G$. For each vertex $v\in V(H)$, let $\mathdefine{\sB_v}\coloneqq\set{A\times \set{v} : A\in \sB}$. Then each $\sB_v$ is a bramble in $G\boxtimes H$ and $\ord(\sB_v)=\ord(\sB)=\bn(G)$. For each node $t\in V(T)$, let
\[\mathdefine{\beta'(t)} \coloneqq \set{v\in V(H) : \beta(t) \text{ is a hitting set for }\sB_v}.\]

We show that $(T,\beta')$ is a tree-decomposition of $H$. Consider any vertex $v\in V(H)$. Since $\sB_v$ is a bramble in $G\boxtimes H$ and $(T,\beta)$ is a tree-decomposition of $G\boxtimes H$, \cref{fact:bag-bramble} implies that there exists a node $x\in V(T)$ such that $\beta(x)$ is a hitting set for $\sB_v$, which implies $v\in \beta'(x)$. Suppose there exists a node $y\in V(T-x)$ such that $v\in \beta'(y)$. Then for all $A\times \set{v}\in \sB_v$, since $A\times \set{v}$ is connected and intersects both $\beta(x)$ and $\beta(y)$, \cref{fact:trace-of-connected-sets-td} implies that $A\times \set{v}$ intersects $\beta(t)$ for every node $t \in V(xTy)$. Consequently, $\beta(t)$ is a hitting set for $\sB_v$ for all $t\in V(xTy)$. Thus $\set{t\in V(T) : v\in \beta'(t)}$ is non-empty and connected in $T$, and $(T,\beta')$ has the vertex-property. 
Next, consider any edge $uv\in E(H)$. The following shows that $\sB_u\cup \sB_v$ is a bramble in $G\boxtimes H$: Since $\sB_u$ and $\sB_v$ are brambles in $G\boxtimes H$, it suffices to show that every $U\times \set{u} \in \sB_u$ and every $V\times \set{v} \in \sB_v$ touch in $G\boxtimes H$. Since $\sB$ is a bramble in $G$, $U$ and $V$ touch in $G$. If $U\cap V=\emptyset$, then there exists an edge $pq\in E(G)$ with $p\in U$ and $q\in V$, therefore $(p,u)(q,v)$ is an edge between $U\times \set{u}$ and $V\times \set{v}$. On the other hand, there exists $w\in U\cap V$. Thus $(w,u)(w,v)$ is an edge between $U\times \set{u}$ and $V\times \set{v}$. In either case, $U\times \set{u} \in \sB_u$ and $V\times \set{v} \in \sB_v$ touch in $G\boxtimes H$, hence $\sB_u\cup \sB_v$ is a bramble in $G\boxtimes H$. Then by \cref{fact:bag-bramble}, there exists a node $t\in V(T)$ such that $\beta(t)$ is a hitting set for $\sB_u\cup \sB_v$. Therefore $\set{u,v}\subseteq \beta'(t)$, and $(T,\beta')$ has the edge-property. Hence $(T,\beta')$ is a tree-decomposition of $H$ as claimed. Fix $t\in V(T)$ such that $|\beta'(t)|$ is maximum. So $|\beta'(t)| \geq \tw(H)+1$. Then
\[\tw(G\boxtimes H)+1 \geq |\beta(t)| \geq \sum_{v\in \beta'(t)}|\beta(t)\cap (V(G)\times \set{v})| \geq \sum_{v\in \beta'(t)}\ord(\sB_v) \geq \bn(G)(\tw(H)+1).\]
Therefore $\tw(G\boxtimes H)+1 \geq (\tw(G)+1)(\tw(H)+1)$ by \cref{thm:treewidth-duality}.
\end{proof}

Next we prove \cref{thm:main-cartesian-sbn-product}, which states that for all graphs $G$ and $H$
\[\tw(G\cprod H) \geq \sbn(G)(\tw(H)+1)-1.\]
We remark that our proof of \cref{thm:main-cartesian-sbn-product} follows the same method as \cref{thm:main-strong-product}, except we use strict brambles.

\begin{proof}[Proof of \cref{thm:main-cartesian-sbn-product}]
Let \define{$(T, \beta)$} be a minimum-width tree-decomposition of $G\cprod H$ and let \define{$\sS$} be a maximum-order strict bramble in $G$. For each vertex $v\in V(H)$, let $\mathdefine{\sS_v}\coloneqq\set{A\times \set{v} : A\in \sS}$. Then each $\sS_v$ is a strict bramble in $G\cprod H$ and $\ord(\sS_v)=\ord(\sS)=\sbn(G)$. For each node $t\in V(T)$, let 
\[\mathdefine{\beta'(t)} \coloneqq \set{v\in V(H) : \beta(t) \text{ is a hitting set for }\sS_v}.\]

We show that $(T,\beta')$ is a tree-decomposition of $H$. Consider any vertex $v\in V(H)$. Since $S_v$ is a bramble in $G\cprod H$ and $(T,\beta)$ is a tree-decomposition of $G\cprod H$, \cref{fact:bag-bramble} implies that there exists a node $x\in V(T)$ such that $\beta(x)$ is a hitting set for $\sS_v$, which implies $v\in \beta'(x)$. Suppose there exists a node $y\in V(T-x)$ such that $v\in \beta'(y)$. Then for all $A\times\set{v}\in \sS_v$, since $A\times\set{v}$ is connected and intersects both $\beta(x)$ and $\beta(y)$, \cref{fact:trace-of-connected-sets-td} implies that $A\times\set{v}$ intersects $\beta(t)$ for every node $t\in V(xTy)$. Consequently, $\beta(t)$ is a hitting set for $\sS_v$ for all $t\in V(xTy)$. Thus $\set{t\in V(T) : v\in \beta'(t)}$ is non-empty and connected in $T$, and $(T,\beta')$ has the vertex-property. Next, consider any edge $uv\in E(H)$. The following shows that $\sS_u\cup \sS_v$ is a bramble in $G\cprod H$: Since $\sS_u$ and $\sS_v$ are brambles in $G\cprod H$, it suffices to show that every $U\times \set{u}\in \sS_u$ and every $V\times \set{v}\in \sS_v$ touch in $G\cprod H$. Since $\sS$ is a strict bramble in $G$, there exists $w\in U\cap V$, which implies that $(w, u)(w, v)$ is an edge between $U\times \set{u}$ and $V\times \set{v}$. Therefore $U\times \set{u}$ and $V\times \set{v}$ touch in $G\cprod H$, and $\sS_u\cup \sS_v$ is a bramble in $G\cprod H$. Then by \cref{fact:bag-bramble}, there exists a node $t\in V(T)$ such that $\beta(t)$ is a hitting set for $\sS_u\cup \sS_v$, so $\set{u,v}\subseteq \beta'(t)$, and $(T,\beta')$ has the edge-property. Hence $(T,\beta')$ is a tree-decomposition of $H$ as claimed. Fix $t\in V(T)$ such that $|\beta'(t)|$ is maximum. So $|\beta'(t)| \geq \tw(H)+1$. Then
\[\tw(G\cprod H)+1 \geq |\beta(t)| \geq \! \sum_{v\in \beta'(t)}|\beta(t)\cap (V(G)\times \set{v})| \geq \! \sum_{v\in \beta'(t)}\ord(\sS_v) \geq \sbn(G)(\tw(H)+1),\]
as required.
\end{proof}

In our proof of \cref{thm:main-cartesian-sbn-product}, the underlying tree $T$ in the tree-decomposition of $G\cprod H$ also indexes the final tree-decomposition of $H$. Hence if we instead take $(T,\beta)$ to be a minimum-width path-decomposition of $G\cprod H$, we would conclude $\pw(G\cprod H) \geq \sbn(G)(\pw(H)+1)-1$. Since $\sbn(G)\geq \frac{1}{2}(\tw(G)+1)$, we deduce
\begin{corollary}
\label{cor:pathwidth-cartesian-product}
For all graphs $G$ and $H$, $\pw(G\cprod H) \geq \frac{1}{2}(\tw(G)+1)(\pw(H)+1)-1$.
\end{corollary}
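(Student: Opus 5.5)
The plan is to rerun the proof of \cref{thm:main-cartesian-sbn-product} with a path-decomposition of $G\cprod H$ in place of a tree-decomposition, and then convert $\sbn(G)$ into $\tw(G)$ using the bound $\sbn(G)\geq \frac{1}{2}(\tw(G)+1)$ of \citet{KOZAWA2014251}.

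Let $(P,\beta)$ be a minimum-width path-decomposition of $G\cprod H$, and let $\sS$ be a maximum-order strict bramble in $G$. For each $v\in V(H)$, set $\sS_v=\set{A\times\set{v}: A\in\sS}$. For each node $t\in V(P)$, let $\beta'(t)$ be the set of $v\in V(H)$ such that $\beta(t)$ is a hitting set for $\sS_v$. The proof of \cref{thm:main-cartesian-sbn-product} used only \cref{fact:trace-of-connected-sets-td,fact:bag-bramble} and the observation that $\sS_u\cup\sS_v$ is a bramble whenever $uv\in E(H)$. All of these hold for an arbitrary tree $T$, and in particular when $T=P$ is a path. So that argument shows verbatim that $(P,\beta')$ is a decomposition of $H$ with the vertex-property and the edge-property. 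Since it is indexed by the same path $P$, it is a path-decomposition of $H$, and hence some node $t$ satisfies $|\beta'(t)|\geq \pw(H)+1$.

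The counting step is unchanged. The sets $\beta(t)\cap(V(G)\times\set{v})$ for $v\in\beta'(t)$ are disjoint, and each has size at least $\ord(\sS_v)=\sbn(G)$. Therefore
\[\pw(G\cprod H)+1\geq |\beta(t)|\geq \sbn(G)(\pw(H)+1).\]
Applying $\sbn(G)\geq\frac{1}{2}(\tw(G)+1)$ then gives the claimed bound
\[\pw(G\cprod H)\geq \tfrac{1}{2}(\tw(G)+1)(\pw(H)+1)-1.\]

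There is no real obstacle. The only point to check is that nothing in the earlier proof changed the index tree, which holds because $\beta'$ is defined on the same nodes as $\beta$.
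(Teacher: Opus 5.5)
Your proposal is correct and is essentially the paper's argument. You rerun the proof of \cref{thm:main-cartesian-sbn-product} with a minimum-width path-decomposition; since the index path is unchanged, this yields $\pw(G\cprod H)\geq \sbn(G)(\pw(H)+1)-1$. You then apply the bound $\sbn(G)\geq\frac{1}{2}(\tw(G)+1)$ of Kozawa, Otachi, and Yamazaki, exactly as the paper does.
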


\section{Pathwidth of a Strong Product}
\label{sec:pathwidth-of-strong-product}

This section proves \cref{thm:main-strong-product-pw}, which states that for all graphs $G$ and $H$
\[\pw(G\boxtimes H) \geq (\pw(G)+1)(\pw(H)+1)-1.\]

In their proof of the Tree-Minor Theorem,\footnote{The Tree-Minor Theorem states that if a forest $F$ is not a minor of $G$, then $\pw(G)\leq |V(F)|-2$.} \citet{BIENSTOCK1991274} introduced a dual parameter for pathwidth, called blockages. The first step in their proof is a duality result (\cref{thm:pathwidth-duality} below) that is analogous to \cref{thm:treewidth-duality}. We use their duality result in our proof of \cref{thm:main-strong-product-pw}.

Let $G$ be a graph and let $k$ be a non-negative integer. For each set $X\subseteq V(G)$, let \define{$\partial X$} be the set of vertices in $X$ that have a neighbour in $V(G)\setminus X$. A \define{$k$-cut} in $G$ is a pair $(A,B)$ of subsets of $V(G)$ such that $|A\cap B| \leq k$ and $G[A]\cup G[B]=G$. A \define{cut} is a $k$-cut for some $k$. A \define{blockage of order $k$} in $G$ is a set $\sB\subseteq 2^{V(G)}$ that satisfies the following \define{blockage axioms}:
\begin{itemize}
    \item if $X\in \sB$, then $|\partial X| \leq k$;
    \item if $Y\subseteq X \in \sB$ and $|\partial Y| \leq k$, then $Y\in \sB$;
    \item if $(X,Y)$ is a $k$-cut in $G$, then $\sB$ contains exactly one of $X$ or $Y$.
\end{itemize}

\begin{theorem}[\cite{BIENSTOCK1991274}]
\label{thm:pathwidth-duality}
$G$ has a blockage of order $k$ if and only if $\pw(G)\geq k$.
\end{theorem}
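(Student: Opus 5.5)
The equivalence has two directions of very different difficulty. The forward direction, that a blockage of order $k$ forces $\pw(G)\geq k$, is a short parity-type argument along a path-decomposition. The converse needs a construction of the blockage from the absence of a good decomposition, and this is the main obstacle.

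\emph{Blockage implies $\pw(G)\geq k$.} Let $\sB$ be a blockage of order $k$ and suppose, for contradiction, that $G$ has a path-decomposition with bags $B_1,\dots,B_n$ in order, each of size at most $k$. Put $A_i\coloneqq B_1\cup\dots\cup B_i$ and $C_i\coloneqq B_i\cup\dots\cup B_n$.

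First, $(A_i,C_i)$ is a $k$-cut. By the vertex-property, $A_i\cap C_i=B_i$, which has size at most $k$. By the edge-property, every edge of $G$ lies in some bag, so $G[A_i]\cup G[C_i]=G$. Similarly, $(A_i,C_{i+1})$ is a $k$-cut, since $A_i\cap C_{i+1}=B_i\cap B_{i+1}$.

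Next, $V(G)\notin\sB$. If it were, the second axiom would put into $\sB$ every set with boundary of size at most $k$. Since $\partial A_1\subseteq B_1$ and $\partial C_1=\partial V(G)=\emptyset$, this would put both sides of the $k$-cut $(A_1,C_1)=(B_1,V(G))$ into $\sB$, contradicting the third axiom. (The degenerate case where the two sides coincide is handled separately.)

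Consequently:
\begin{itemize}
\item $A_1=B_1\in\sB$, by the third axiom applied to $(A_1,C_1)$;
\item $A_n=V(G)\notin\sB$.
\end{itemize}
Choose $i<n$ maximal with $A_i\in\sB$. Then $A_{i+1}\notin\sB$, so the cut $(A_{i+1},C_{i+1})$ forces $C_{i+1}\in\sB$. Now both sides of the $k$-cut $(A_i,C_{i+1})$ lie in $\sB$, contradicting the third axiom.

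\emph{$\pw(G)\geq k$ implies a blockage.} I would define $\sB$ to be the family of sets $X$ with $|\partial X|\leq k$ that are ``linearly decomposable''. This means $G[X]$ has a path-decomposition of width at most $k-1$ whose last bag contains $\partial X$; allowing the last bag to have size $k$ or $k+1$ needs care and is to be calibrated. The first axiom then holds by definition.

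For ``at most one'' in the third axiom, suppose $(X,Y)$ is a $k$-cut with both sides in $\sB$. Glue the decomposition of $X$ to the reverse of the decomposition of $Y$ through a bag containing $X\cap Y$. This yields a path-decomposition of $G$ of width less than $k$, contradicting $\pw(G)\geq k$.

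The hard part is the second axiom and ``at least one'' in the third. Both are statements that small pieces can be decomposed, and I expect to need a monotonicity or linkedness lemma in the spirit of Bienstock, Robertson, Seymour and Thomas. The first thing I would try is to choose among suitable decompositions one minimising a potential such as $\sum_i|A_i|$. Submodularity of $X\mapsto|\partial X|$ should then let me uncross a decomposition of $X$ with a subset $Y$, producing the required decomposition of $Y$, and handle the dichotomy for $k$-cuts in the same way.
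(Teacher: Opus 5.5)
The paper does not prove this theorem; it quotes it from Bienstock, Robertson, Seymour and Thomas, so your proposal has to stand on its own. Your forward direction does. The pairs $(A_i,C_i)$ and $(A_i,C_{i+1})$ are $k$-cuts, and $V(G)\notin\sB$; this follows most quickly from the $k$-cut $(\emptyset,V(G))$ and the second axiom, which also avoids your degenerate case. The maximal-index argument then gives the contradiction.

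The converse has a genuine gap, and it is not the missing monotonicity lemma: the family you propose cannot be a blockage, however you calibrate the widths. It is defined by an isomorphism-invariant property of $(G[X],\partial X)$, so it is invariant under every automorphism of $G$. The third axiom, however, can force an asymmetric choice.

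Take $G=C_4$ with vertices $u,a,v,b$ in cyclic order and $k=2=\pw(C_4)$. The two sides of the $2$-cut $(\set{u,a,v},\set{u,b,v})$ are exchanged by the automorphism swapping $a$ and $b$, yet exactly one of them must lie in $\sB$. Concretely, with width at most $k-1=1$, neither side qualifies. A width-$1$ decomposition of the path $uav$ with $u,v$ in the last bag would also be a width-$1$ decomposition of the triangle $uav$. So ``at least one'' fails, and this is precisely the step you flagged as hard. If you allow a last bag of size $k+1=3$, both sides qualify and ``at most one'' fails. The same symmetry argument applies to two disjoint copies of any $H$ with $\pw(H)=k$, split by the $0$-cut between them.

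What your family does capture is the part of the blockage that is forced. Suppose $(W_1,\dots,W_r)$ witnesses $X$. Run your forward argument on the $k$-cuts $(W_1\cup\dots\cup W_i,\ W_i\cup\dots\cup W_r\cup(V(G)\setminus X))$. This shows that $X$ lies in every blockage of order $k$.

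The missing idea is therefore a symmetry-breaking extension of this core to a full blockage. For instance, you could use an induction over partial orientations of the $k$-cuts, in which an undecided cut is oriented both ways and the decompositions obtained from the two orientations are glued, in the spirit of Bellenbaum and Diestel's proof of treewidth duality.

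Separately, your gluing argument for ``at most one'' is incomplete as written. Your definition puts only $\partial X$ and $\partial Y$, not $X\cap Y$, into the end bags. Inserting a bag $X\cap Y$ breaks the vertex-property for vertices of $X\cap Y$ that occur earlier but not in the last bag. You would first have to shrink the cut until its separator equals both boundaries, and that step already needs monotonicity.
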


Subsequent proofs of the Tree-Minor Theorem by \citet{diestel1995graph} and \citet{seymour2023shorter} avoid pathwidth-blockage duality by employing a shorter and more direct approach.

For convenience, our proof of \cref{thm:main-strong-product-pw} uses the notion of stoppages, which were also introduced by \citet[\S 4]{BIENSTOCK1991274} and are essentially equivalent to blockages. Let $G$ be a graph and let $k$ be a non-negative integer. A \define{stoppage of order $k$} in $G$ is a set $\mathscr{Y}$ of $k$-cuts in $G$ that satisfies the following \define{stoppage axioms}:
\begin{itemize}
    \item if $(A,B)$ is a $k$-cut in $G$, then $(A,B)\in \mathscr{Y}$ or $(B,A)\in \mathscr{Y}$;
    \item if $(A,B)\in \mathscr{Y}$ and $(C,D)\in \mathscr{Y}$, then $G[A]\cup G[C]\not=G$.
\end{itemize}

\citet{BIENSTOCK1991274} stated the relationship between blockages and stoppages but left the details to the reader. We provide the details for completeness.

\begin{lemma}[{\citep[4.1 (reworded)]{BIENSTOCK1991274}}]
\label{lem:blockage-stoppage}
$G$ has a blockage of order $k$ if and only if $G$ has a stoppage of order $k$.
\end{lemma}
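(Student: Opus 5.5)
I will give explicit translations in both directions, matching a cut $(A,B)$ with the separation whose ``small side'' is $A$. The guiding principle is that a blockage $\sB$ lists the ``small'' sides, and a stoppage should list exactly the $k$-cuts whose first coordinate is small. Throughout I use the easy observation that if $(A,B)$ is a $k$-cut, then $\partial A\subseteq A\cap B$. Indeed, every edge of $G$ lies in $G[A]$ or $G[B]$, so a vertex of $A$ with a neighbour outside $A$ must lie in $B$. Hence $|\partial A|\leq k$. Conversely, for any $X$ with $|\partial X|\leq k$, the pair $(X,(V(G)\setminus X)\cup\partial X)$ is a $k$-cut.

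\emph{Blockage $\Rightarrow$ stoppage.} Given a blockage $\sB$ of order $k$, let $\mathscr{Y}$ be the set of $k$-cuts $(A,B)$ with $A\in\sB$.

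The first stoppage axiom is immediate from the third blockage axiom: for a $k$-cut $(A,B)$, exactly one of $A,B$ lies in $\sB$, so $(A,B)\in\mathscr{Y}$ or $(B,A)\in\mathscr{Y}$.

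For the second axiom, suppose $(A,B),(C,D)\in\mathscr{Y}$ and $G[A]\cup G[C]=G$. I want to derive a contradiction. The pair $(A,C)$ need not be a $k$-cut, since $|A\cap C|$ could exceed $k$. So instead I shrink $C$: set $C'=C\cap B$. I claim $(A,C')$ is a cut with $A\cap C'\subseteq A\cap B$, so $|A\cap C'|\leq k$.

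To check $G[A]\cup G[C']=G$, note first that every vertex lies in $A$ or $C$. A vertex outside $A$ lies in $B$ (because $(A,B)$ is a cut), hence lies in $C'$. For edges, take any edge not in $G[A]$. It lies in $G[C]$ and also in $G[B]$, because $(A,B)$ is a cut. So both endpoints are in $C\cap B=C'$.

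Now $C'\subseteq C\in\sB$ and $|\partial C'|\leq k$ by the observation, so $C'\in\sB$ by the second blockage axiom. But $A\in\sB$ too, and $(A,C')$ is a $k$-cut, contradicting ``exactly one'' in the third axiom.

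\emph{Stoppage $\Rightarrow$ blockage.} Given a stoppage $\mathscr{Y}$, define $\sB=\{X : |\partial X|\leq k$ and there is $(A,B)\in\mathscr{Y}$ with $X\subseteq A\}$.

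The first blockage axiom holds by definition, and the second is inherited because the definition is downward closed among sets with $|\partial X|\leq k$.

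For the third axiom, let $(X,Y)$ be a $k$-cut. By the first stoppage axiom, $(X,Y)$ or $(Y,X)$ lies in $\mathscr{Y}$, so at least one of $X,Y$ is in $\sB$. Suppose both are. Then $X\subseteq A$ and $Y\subseteq C$ for some $(A,B),(C,D)\in\mathscr{Y}$. Since $G[X]\cup G[Y]=G$, we get $G[A]\cup G[C]=G$, contradicting the second stoppage axiom.

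The main obstacle is the second-axiom verification in the first direction, namely passing from $G[A]\cup G[C]=G$ to a genuine $k$-cut with both sides in $\sB$. The intersection trick $C'=C\cap B$ handles this, and the care needed is in verifying the edge condition as above.
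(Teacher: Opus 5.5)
Your proof is correct. Its skeleton matches the paper's: the same map $\sB\mapsto\mathscr{Y}$ in the forward direction and the same contradiction strategy. Both key verifications, however, are done differently and more economically.

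In the forward direction, the paper tightens $B$ to $B'=B\setminus X$, where $X$ is the set of vertices of $A\cap B$ with no neighbour in $B\setminus A$. It then has to argue via neighbours that $B'\subseteq C$. Your witness $C'=C\cap B$ comes purely set-theoretically from the fact that $G[A]\cup G[B]=G$ and $G[A]\cup G[C]=G$ together imply $G[A]\cup G[B\cap C]=G$. The paper's $B'$ is in fact contained in your $C'$.

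In the reverse direction, the paper takes $\sB$ to be exactly the set of first coordinates of $\mathscr{Y}$. It must therefore prove downward closure, which it does by building the $k$-cut $(Y,(V(G)\setminus Y)\cup\partial Y)$ and invoking both stoppage axioms. You instead define $\sB$ as the downward closure (among sets with $|\partial X|\leq k$). This makes the second blockage axiom tautological. The ``not both'' part of the third axiom still goes through, because the covering condition $G[X]\cup G[Y]=G$ is preserved when $X$ and $Y$ are enlarged.

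The paper's route gives a byproduct your proof does not: the first coordinates of a stoppage are already downward closed, so the two families coincide. This is irrelevant for the lemma itself. Your route never needs the converse half of your opening observation, namely that $(X,(V(G)\setminus X)\cup\partial X)$ is a $k$-cut.
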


\begin{proof}
$(\implies)$ Suppose $\sB$ is a blockage of order $k$ in $G$. Let $\mathscr{Y}$ be the set of all $k$-cuts $(A,B)$ with $A\in \sB$. We show that $\mathscr{Y}$ is a stoppage of order $k$ in $G$. The first stoppage axiom follows from the third blockage axiom. For the second stoppage axiom, suppose there exists $(A,B)\in \mathscr{Y}$ and $(C,D)\in \mathscr{Y}$ with $G[A]\cup G[C]=G$. Let $X$ be the set of vertices in $A\cap B$ that have no neighbour in $B\setminus A$, and let $B'\coloneqq B\setminus X$. Then $(A, B')$ is a $k$-cut. Since $A\in \sB$, the third blockage axiom implies $B'\not\in \sB$. Note that $B'\setminus A\subseteq C$ because $G[A]\cup G[C]=G$. Moreover, each vertex $u\in A\cap B'$ has a neighbour $v\in B'\setminus A$. Then since $uv\not\in E(G[A])$, $uv\in E(G[C])$, thus $u\in C$. Hence $B'\subseteq C$, so the second blockage axiom implies $B'\in \sB$, contradiction.

$(\impliedby)$ Suppose $\mathscr{Y}$ is a stoppage of order $k$ in $G$. Let $\sB \coloneqq \set{A : (A,B)\in \mathscr{Y}}$. We show that $\sB$ is a blockage of order $k$ in $G$. Since every element of $\mathscr{Y}$ is a $k$-cut, the first blockage axiom holds. For the second blockage axiom, suppose $Y\subseteq X\in \sB$ and $|\partial Y| \leq k$. Then there exists $W\subseteq V(G)$ such that $(X,W)\in \mathscr{Y}$. Furthermore, by letting $Z\coloneqq (V(G)\setminus Y)\cup \partial Y$, we have that $(Y, Z)$ is a $k$-cut in $G$. Since $Y\subseteq X$, it follows that $(V(G)\setminus X)\cup \partial X \subseteq (V(G)\setminus Y)\cup \partial Y$, thus $G[X]\cup G[Z] = G$. Hence by the second stoppage axiom, $(Z,Y)\not\in \mathscr{Y}$, thus $(Y,Z)\in \mathscr{Y}$ by the first stoppage axiom, so $Y\in \sB$. Next we prove the third blockage axiom. Suppose that $(X,Y)$ is a $k$-cut in $G$. Then the first stoppage axiom implies $X\in \sB$ or $Y\in \sB$. If $X \in \sB$ and $Y\in \sB$, then $(X,W)\in \mathscr{Y}$ and $(Y,Z)\in \mathscr{Y}$ for some $W,Z\subseteq V(G)$. However since $(X,Y)$ is a cut, $G[X]\cup G[Y]=G$, which contradicts the second stoppage axiom. 
\end{proof}

We now prove \cref{thm:main-strong-product-pw}.

\begin{proof}[Proof of \cref{thm:main-strong-product-pw}]
Let $\mathdefine{k}\coloneqq(\pw(G)+1)(\pw(H)+1)-1$. Let \define{$\mathscr{G}$} be a stoppage of order $\pw(G)$ in $G$, and let \define{$\mathscr{H}$} be a stoppage of order $\pw(H)$ in $H$. For each $u\in V(H)$, let 
\[\mathdefine{G_u}\coloneqq(G\boxtimes H)[V(G)\times \set{u}] \quad \text{ and } \quad \mathdefine{\mathscr{G}_u}\coloneqq\set{(A\times \set{u}, B\times\set{u}) : (A,B)\in \mathscr{G}}.\]
Then $\mathscr{G}_u$ is a stoppage of order $\pw(G)$ in $G_u$. For each $u\in V(H)$ and each subset $S\subseteq V(G\boxtimes H)$, let $\mathdefine{S_u}\coloneqq V(G_u)\cap S$ and let $\mathdefine{\pi(S)}\coloneqq\set{v\in V(G) : (v,w) \in S}$.

Consider each $k$-cut $(A,B)$ in $G\boxtimes H$. Note that $(A_u, B_u)$ is a cut in $G_u$ for each $u\in V(H)$. Hence we may define
\[\mathdefine{X_{(A,B)}} \coloneqq\set{u\in V(H) : |A_u\cap B_u| > \pw(G)\text{ or }(A_u, B_u) \in \mathscr{G}_u}.\]
Note that the first stoppage axiom for $\mathscr{G}_u$ implies:
\begin{equation}
\label{eq:side-of-X}
(A_u,B_u)\in \mathscr{G}_u \text{ for all } u\in X_{(A, B)}\setminus X_{(B, A)}.\tag{$\star$}
\end{equation}

\begin{claim}
\label{clm:cut-transfer}
If $(A,B)$ is a $k$-cut in $G\boxtimes H$, then $(X_{(A, B)}, X_{(B, A)})$ is a $\pw(H)$-cut in $H$.
\end{claim}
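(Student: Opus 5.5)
Set $X\coloneqq X_{(A,B)}$ and $Y\coloneqq X_{(B,A)}$. The plan is to check the two defining properties of a $\pw(H)$-cut directly: first that $H[X]\cup H[Y]=H$, and then that $|X\cap Y|\leq \pw(H)$. Both checks reduce to the stoppage axioms for $\mathscr{G}$, transported to $\mathscr{G}_u$, together with one structural observation about cuts. That observation is the following. If $(A,B)$ is a cut in a graph and $K$ is a clique, then $K\subseteq A$ or $K\subseteq B$. To see this, suppose $x\in K\setminus A$. Then $x\in B\setminus A$, so every edge at $x$ lies in the graph induced on $B$, and hence $K\subseteq B$.

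\emph{Size of $X\cap Y$.} Let $u\in X\cap Y$. Suppose, for contradiction, that $|A_u\cap B_u|\leq\pw(G)$. Then the definitions of $X$ and $Y$ give $(A_u,B_u)\in\mathscr{G}_u$ and $(B_u,A_u)\in\mathscr{G}_u$. But $(A_u,B_u)$ is a cut of $G_u$, so $G_u[A_u]\cup G_u[B_u]=G_u$, which contradicts the second stoppage axiom. Hence $|A_u\cap B_u|\geq \pw(G)+1$ for every $u\in X\cap Y$. The sets $A_u\cap B_u$ are disjoint for distinct $u$, so summing over $u\in X\cap Y$ gives
\[(\pw(G)+1)|X\cap Y|\leq |A\cap B|\leq k=(\pw(G)+1)(\pw(H)+1)-1.\]
It follows that $|X\cap Y|\leq \pw(H)$.

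\emph{Covering.} Every vertex $u\in V(H)$ lies in $X\cup Y$. Indeed, either $|A_u\cap B_u|>\pw(G)$, in which case $u$ lies in both, or $(A_u,B_u)$ is a $\pw(G)$-cut of $G_u$, in which case the first stoppage axiom puts $u$ in $X$ or in $Y$. The main step is edges. Let $uv\in E(H)$ and suppose the edge lies in neither $H[X]$ nor $H[Y]$. Since both ends lie in $X\cup Y$, without loss of generality $u\in X\setminus Y$ and $v\in Y\setminus X$. By \eqref{eq:side-of-X} we get $(A_u,B_u)\in\mathscr{G}_u$, and symmetrically $(B_v,A_v)\in\mathscr{G}_v$. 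Projecting via $\pi$, both $(\pi(A_u),\pi(B_u))$ and $(\pi(B_v),\pi(A_v))$ belong to $\mathscr{G}$. The second stoppage axiom then says
\[G[\pi(A_u)]\cup G[\pi(B_v)]\neq G.\]

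I will contradict this by showing the union is all of $G$. For a vertex $g$, and for an edge $gh\in E(G)$, consider the set $\{(g,u),(h,u),(g,v),(h,v)\}$ (with $h=g$ in the vertex case). This set is a clique in $G\boxtimes H$ because $uv\in E(H)$. By the clique observation, the set lies in $A$ or in $B$. If it lies in $A$, then $g,h\in\pi(A_u)$. If it lies in $B$, then $g,h\in\pi(B_v)$. So every vertex and every edge of $G$ lies in $G[\pi(A_u)]\cup G[\pi(B_v)]$, which is the required contradiction.

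I expect the edge-covering step to be the only real obstacle. In particular, one must choose the two layers $u$ and $v$ so that the strong-product cliques across adjacent layers link $A_u$ with $B_v$; this is exactly where the strong product, rather than the Cartesian product, is used.
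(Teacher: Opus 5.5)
Your proof is correct and follows the paper's argument step for step. It obtains covering from the first stoppage axiom, the bound $|X\cap Y|\leq \pw(H)$ from the second stoppage axiom plus summing over disjoint layers, and the edge condition from \eqref{eq:side-of-X}, projection under $\pi$, and showing that $(\pi(A_u),\pi(B_v))$ is a cut in $G$. The only difference is cosmetic: you certify that last cut in one stroke by noting that the clique $\set{g,h}\times\set{u,v}$ lies wholly in $A$ or in $B$, whereas the paper checks vertex-covering and the absence of a crossing edge $(a,v)(b,u)$ separately.
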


\begin{subproof}
We first show that $X_{(A, B)} \cup X_{(B,A)} = V(H)$. Consider any $u\in V(H) \setminus X_{(A, B)}$. Then $|A_u\cap B_u| \leq \pw(G)$ and $(A_u, B_u) \not\in \mathscr{G}_u$, therefore by the first stoppage axiom for $\mathscr{G}_u$, $(B_u, A_u) \in \mathscr{G}_u$, implying that $u\in X_{(B, A)}$. Hence $X_{(A, B)} \cup X_{(B, A)} = V(H)$.

By the stoppage axioms, $X_{(A, B)}\cap X_{(B, A)} = \set{u\in V(H) : |A_u\cap B_u|>\pw(G)}$, thus
\[(\pw(G)+1)|X_{(A, B)}\cap X_{(B, A)}| \leq \sum_{u\in X_{(A, B)}\cap X_{(B, A)}}|A_u\cap B_u| \leq |A\cap B| \leq k,\]
which implies $|X_{(A, B)}\cap X_{(B, A)}|\leq \pw(H)$.

Finally, we show that there is no edge $uv\in E(H)$ such that $u\in X_{(A,B)} \setminus X_{(B,A)}$ and $v\in X_{(B,A)} \setminus X_{(A,B)}$. Assume for a contradiction there is such an edge $uv\in E(H)$. Then \eqref{eq:side-of-X} implies that $(A_u, B_u) \in \mathscr{G}_u$ and $(B_v, A_v) \in \mathscr{G}_v$. Therefore $(\pi(A_u),\pi(B_u))\in \mathscr{G}$ and $(\pi(B_v), \pi(A_v))\in \mathscr{G}$. Then by the second stoppage axiom for $\mathscr{G}$, $G[\pi(A_u)]\cup G[\pi(B_v)]\not= G$. We now show that $(\pi(A_u), \pi(B_v))$ is a cut in $G$, which will contradict the previous sentence. As a first step, we show that $\pi(A_u)\cup \pi(B_v)=V(G)$. Consider any $b\in V(G)\setminus \pi(A_u)$. Thus $(b,u)\not\in A_u$, implying that $(b,u)\not\in A$. Therefore since $(A,B)$ is a cut in $G\boxtimes H$, $(b,u)\in B\setminus A$. Then $(b,v) \in N_{G\boxtimes H}(b,u)\subseteq B$, which implies $b\in \pi(B_v)$. Hence $\pi(A_u)\cup \pi(B_v)=V(G)$. Next, assume for a contradiction that there exists an edge $ab\in E(G)$ such that $a\in \pi(A_u)\setminus \pi(B_v)$ and $b\in \pi(B_v)\setminus \pi(A_u)$. Since $a\not\in \pi(B_v)$, $(a,v)\not\in B_v$, implying that $(a,v)\not\in B$. Therefore since $(A,B)$ is a cut in $G\boxtimes H$, $(a,v)\in A\setminus B$. Similarly, from $b\not\in \pi(A_u)$ we deduce that $(b,u)\in B\setminus A$. Now since $ab\in E(G)$ and $uv\in E(H)$, $(a,v)(b,u)\in E(G\boxtimes H)$, which along with $(a,v)\in A\setminus B$ and $(b,u)\in B\setminus A$ contradicts $(A,B)$ being a cut in $G\boxtimes H$. Hence we have proved that $(\pi(A_u), \pi(B_v))$ is a cut in $G$, so $G[\pi(A_u)]\cup G[\pi(B_v)]= G$, the desired contradiction.
\end{subproof}

Let \define{$\mathscr{Y}$} be the set of all $k$-cuts $(A,B)$ in $G\boxtimes H$ such that $(X_{(A,B)}, X_{(B,A)})\not\in \mathscr{H}$.

\begin{claim}
\label{clm:Y-stoppage}
$\mathscr{Y}$ is a stoppage of order $k$ in $G\boxtimes H$.
\end{claim}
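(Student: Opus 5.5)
We verify the two stoppage axioms for $\mathscr{Y}$ directly, using \cref{clm:cut-transfer} to move everything into $H$, where the axioms for $\mathscr{H}$ apply.

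\textbf{First axiom.} Let $(A,B)$ be a $k$-cut in $G\boxtimes H$. By \cref{clm:cut-transfer}, $(X_{(A,B)},X_{(B,A)})$ and $(X_{(B,A)},X_{(A,B)})$ are both $\pw(H)$-cuts in $H$. The second stoppage axiom for $\mathscr{H}$ shows they cannot both lie in $\mathscr{H}$: their first coordinates $X_{(A,B)},X_{(B,A)}$ satisfy $H[X_{(A,B)}]\cup H[X_{(B,A)}]=H$ because the pair is a cut. Hence at least one of them is not in $\mathscr{H}$, so $(A,B)\in\mathscr{Y}$ or $(B,A)\in\mathscr{Y}$.

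\textbf{Second axiom.} Suppose $(A,B),(C,D)\in\mathscr{Y}$ with $(G\boxtimes H)[A]\cup(G\boxtimes H)[C]=G\boxtimes H$. We aim to show $H[X_{(B,A)}]\cup H[X_{(D,C)}]=H$. Since $(A,B)\in\mathscr{Y}$, the cut $(X_{(A,B)},X_{(B,A)})$ is not in $\mathscr{H}$, so by the first axiom $(X_{(B,A)},X_{(A,B)})\in\mathscr{H}$; likewise $(X_{(D,C)},X_{(C,D)})\in\mathscr{H}$. The claimed equality then contradicts the second axiom for $\mathscr{H}$.

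To prove the equality, first fix $u\in V(H)$. The hypothesis restricts to $G_u[A_u]\cup G_u[C_u]=G_u$. If $u\notin X_{(B,A)}\cup X_{(D,C)}$, then $u\in X_{(A,B)}\setminus X_{(B,A)}$ and $u\in X_{(C,D)}\setminus X_{(D,C)}$. By \eqref{eq:side-of-X}, $(A_u,B_u),(C_u,D_u)\in\mathscr{G}_u$. The second axiom for $\mathscr{G}_u$ then gives $G_u[A_u]\cup G_u[C_u]\neq G_u$, a contradiction. So every vertex of $H$ lies in $X_{(B,A)}\cup X_{(D,C)}$.

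For edges, take $uv\in E(H)$ and suppose $u\notin X_{(D,C)}$ and $v\notin X_{(B,A)}$, so that the edge is covered by neither side. As above, $(C_u,D_u)\in\mathscr{G}_u$ and $(A_v,B_v)\in\mathscr{G}_v$. We then mimic the projection argument in the proof of \cref{clm:cut-transfer} to show $G[\pi(A_v)]\cup G[\pi(C_u)]=G$.
\begin{itemize}
\item For vertices: if $b\notin\pi(A_v)\cup\pi(C_u)$, then $(b,v)\notin A$ and $(b,u)\notin C$. The edge $(b,u)(b,v)$ must be covered by $A$ or by $C$, which is impossible.
\item For edges: if $a\in\pi(A_v)\setminus\pi(C_u)$, $b\in\pi(C_u)\setminus\pi(A_v)$ and $ab\in E(G)$, consider the product edge $(b,v)(a,u)$. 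Since $(b,v)\notin A$ and $(a,u)\notin C$, it is covered by neither $A$ nor $C$, again a contradiction.
\end{itemize}
This equality contradicts the second axiom for $\mathscr{G}$ applied to $(\pi(A_v),\pi(B_v)),(\pi(C_u),\pi(D_u))\in\mathscr{G}$. So every edge of $H$ lies within $X_{(B,A)}$ or within $X_{(D,C)}$, which completes the equality.

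\textbf{Main obstacle.} The delicate step is this edge case. One must choose the right mixed pair of layers, $A$ at $v$ and $C$ at $u$, and check that the diagonal edges of the strong product supply exactly the coverage needed.
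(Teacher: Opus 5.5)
Your proof is correct and follows essentially the paper's route: the same ingredients (\cref{clm:cut-transfer} with the first axiom of $\mathscr{H}$ to get $(X_{(B,A)},X_{(A,B)}),(X_{(D,C)},X_{(C,D)})\in\mathscr{H}$; \eqref{eq:side-of-X} to put the relevant layer cuts into $\mathscr{G}_u,\mathscr{G}_v$; and strong-product edges to lift a coverage failure from $G$ to $G\boxtimes H$). The paper argues directly, extracting touching witnesses $u,v$ in $H$ and then $b,d$ in $G$ so that $(b,u)\in B\setminus A$ and $(d,v)\in D\setminus C$ touch, while you argue by contradiction with an explicit vertex/edge case split; your reduction of an arbitrary uncovered edge to the configuration $u\notin X_{(D,C)}$, $v\notin X_{(B,A)}$ silently uses your vertex step, which is fine.
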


\begin{subproof}
We begin by proving the first stoppage axiom for $\mathscr{Y}$. Suppose there exists a $k$-cut $(A,B)$ in $G\boxtimes H$ such that $(A,B)\not\in \mathscr{Y}$ and $(B,A)\not\in \mathscr{Y}$.~Then $(X_{(A,B)}, X_{(B,A)}) \in \mathscr{H}$ and $(X_{(B,A)}, X_{(A,B)})\in \mathscr{H}$. Therefore $(X_{(A,B)}, X_{(B,A)})$ is a cut in $H$, and $H[X_{(A,B)}]\cup H[X_{(B,A)}] \not= H$ by the second stoppage axiom for $\mathscr{H}$, a contradiction. Hence $(A,B)\in \mathscr{Y}$ or $(B,A)\in \mathscr{Y}$, as required.

We now prove the second stoppage axiom for $\mathscr{Y}$. Consider any $(A,B)\in \mathscr{Y}$ and any $(C,D)\in \mathscr{Y}$. By definition of $\mathscr{Y}$, $(X_{(A,B)}, X_{(B,A)})\not\in \mathscr{H}$ and $(X_{(C,D)}, X_{(D,C)})\not\in \mathscr{H}$. Note that by \cref{clm:cut-transfer}, $(X_{(A,B)}, X_{(B,A)})$ and $(X_{(C,D)}, X_{(D,C)})$ are $\pw(H)$-cuts in $H$, thus the first stoppage axiom for $\mathscr{H}$ implies that $(X_{(B,A)}, X_{(A,B)}) \in \mathscr{H}$ and $(X_{(D,C)}, X_{(C,D)}) \in \mathscr{H}$. Then the second stoppage axiom for $\mathscr{H}$ implies $H[X_{(B,A)}] \cup H[X_{(D,C)}] \not= H$. Hence there exists $u\in X_{(A,B)}\setminus X_{(B,A)}$ and $v\in X_{(C,D)}\setminus X_{(D,C)}$ such that $\set{u}$ and $\set{v}$ touch in $H$. Moreover, \eqref{eq:side-of-X} implies that $(A_u, B_u) \in \mathscr{G}_u$ and $(C_v, D_v)\in \mathscr{G}_v$. Then $(\pi(A_u), \pi(B_u))\in \mathscr{G}$ and $(\pi(C_v), \pi(D_v))\in \mathscr{G}$, so $G[\pi(A_u)]\cup G[\pi(C_v)]\not= G$ by the second stoppage axiom for $\mathscr{G}$. Hence there exists $b\in \pi(B_u)\setminus \pi(A_u)$ and $d\in \pi(D_v)\setminus\pi(C_v)$ such that $\set{b}$ and $\set{d}$ touch in $G$. Consequently, $\set{(b,u)}$ and $\set{(d,v)}$ touch in $G\boxtimes H$. To complete the argument, we show that $(b,u)\in B\setminus A$ and $(d,v)\in D\setminus C$, which will imply that $(G\boxtimes H)[A]\cup (G\boxtimes H)[C]\not= G\boxtimes H$. Indeed $(b,u)\in B_u \subseteq B$, and if $(b,u)\in A$, then $b\in \pi(A_u)$, contrary to the definition of $b$. Similarly we deduce that $(d,v)\in D\setminus C$, as desired.
\end{subproof}

Therefore \cref{thm:pathwidth-duality,lem:blockage-stoppage,clm:Y-stoppage} imply that $\pw(G\boxtimes H) \geq k$.
\end{proof}

\section{Application to Vertex Expansion}
\label{sec:vertex-expansion}

This section shows that products of graph classes with positive vertex expansion contain linearly-sized graphs with positive vertex expansion (\cref{thm:main-expansion} below).

Let $G$ be a graph. For a set $X\subseteq V(G)$, let $\mathdefine{N_G(X)} \coloneqq \bigcup_{x\in X}N_G(x) \setminus X$. Define the \define{vertex expansion} of $G$ to be
\[\mathdefine{\varphi(G)} \coloneqq \; \min_{X\subseteq V(G)\, : \,0<|X|\leq \frac{1}{2}|V(G)|}\;\; \frac{|N_G(X)|}{|X|}.\]
Define the \define{vertex expansion} of a graph class $\mathcal{G}$ to be $\mathdefine{\varphi(\mathcal{G})} \coloneqq \inf_{G\in \mathcal{G}}\varphi(G)$.

Grohe and Marx \cite{GROHE2009218} established the following relationships between treewidth and vertex expansion:

\begin{theorem}[{\citep[Proposition 1]{GROHE2009218}}]
\label{thm:expansion-treewidth}
For every graph $G$, $\tw(G)\geq \floor{\tfrac{1}{4}\varphi(G) |V(G)|}$.
\end{theorem}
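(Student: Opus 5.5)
The plan is to argue by contrapositive through balanced separators. Let $n\coloneqq|V(G)|$ and $k\coloneqq\tw(G)$, and fix a minimum-width tree-decomposition $(T,\beta)$ of $G$.

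\textbf{Balanced bag.} The first step is the standard fact that some bag $S\coloneqq\beta(t)$, with $|S|\leq k+1$, is a balanced separator: every component of $G-S$ has at most $\tfrac12 n$ vertices. To prove it, orient each edge $tt'$ of $T$ towards the side whose subtree contains, outside $\beta(t)\cap\beta(t')$, a component of size greater than $\tfrac12 n$. Such a component is connected, so by \cref{fact:trace-of-connected-sets-td} it lies on one side. At most one side can host it, since the two sides' vertex sets outside the separator are disjoint. A sink of this orientation gives the desired bag.

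\textbf{Grouping the components.} Next I would group the components of $G-S$ into a set $X$ that is a union of components. Such an $X$ automatically has $N_G(X)\subseteq S$. I want $|X|\leq\tfrac12 n$ while $|X|$ is a constant fraction of $n-|S|$. I would add components in decreasing order of size while the total stays at most $\tfrac12 n$. The usual bin-packing argument then gives either $|X|\geq\tfrac13(n-|S|)$, or else all components fit, so $X=V(G)\setminus S$. In both cases I expect to get $|X|\geq c\,(n-|S|)$ for an explicit $c$; I aim for $c=\tfrac12$ with a more careful two-bin split, placing each component into whichever of two bins is currently smaller. The definition of $\varphi$ then gives
\[\varphi(G)\;\leq\;\frac{|N_G(X)|}{|X|}\;\leq\;\frac{|S|}{c\,(n-|S|)},\]
which rearranges to $|S|\geq c\varphi(G)n/(1+c\varphi(G))$.

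\textbf{Converting to the stated bound.} The last step is to turn this into $k+1\geq|S|$ with $k\geq\floor{\tfrac14\varphi(G)n}$. This needs the side fact that $\varphi(G)$ is bounded; one checks this by taking $X$ of size $\floor{n/2}$, which gives $\varphi(G)\leq\lceil n/2\rceil/\floor{n/2}$. Very small $n$, where $\floor{\tfrac14\varphi n}$ is $0$ or the inequality is trivial, would be handled separately. I expect the main obstacle to be exactly this constant bookkeeping: making the packing constant and the $+1$ from bag size fit into the factor $\tfrac14$ and the floor. If the greedy split is too lossy, I would first try the stronger separator form, namely a set $S$ of at most $k+1$ vertices whose removal splits $V(G)\setminus S$ into two unions of components each of size at most $\tfrac23(n-|S|)$, and then take $X$ to be the smaller part.
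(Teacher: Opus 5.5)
The paper quotes this result from Grohe and Marx without proof, so your argument has to stand on its own. Your balanced-bag step and the observation $N_G(X)\subseteq S$ for a union $X$ of components of $G-S$ are fine. The gap is in the grouping and conversion steps, where you expected trouble.

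\emph{The constant $c=\tfrac12$ is not attainable.} Suppose $G-S$ has three components of size $\tfrac13(n-|S|)$ each, and $n-|S|>3|S|$. Then no union of components has size in $[\tfrac12(n-|S|),\tfrac12 n]$. Your greedy packing and your $\tfrac23$-separator fallback both give only $c=\tfrac13$, hence $\tw(G)+1\geq|S|\geq \varphi n/(3+\varphi)$.

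\emph{With $c=\tfrac13$ the bound fails at one value.} For $\varphi(G)<1$ the bound is strictly larger than $\tfrac14\varphi(G)n$, which is what you need to beat the floor. But for $\varphi(G)=1$ and $n=4j$ it gives only $\tw(G)\geq j-1$ instead of $j$. More careful bookkeeping will not fix this. The loss comes from measuring $|X|$ against $n-|S|$, even though $X$ may have up to $\tfrac12 n$ vertices.

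\emph{The fix is to compare $|X|$ with $\tfrac14 n$ directly.}

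First suppose $n-|S|>\tfrac14 n$. Then some union $X$ of components of $G-S$ satisfies $\tfrac14 n<|X|\leq\tfrac12 n$:
\begin{enumerate}
\item if $n-|S|\leq \tfrac12 n$, take all of $V(G)\setminus S$;
\item otherwise, take a component with more than $\tfrac14 n$ vertices if one exists; it has at most $\tfrac12 n$ vertices by the choice of the bag;
\item if no such component exists, add components one at a time until the total first exceeds $\tfrac14 n$; at that point the total is at most $\tfrac12 n$.
\end{enumerate}
If $S=\emptyset$, this $X$ shows $\varphi(G)=0$ and there is nothing to prove. Otherwise $\varphi(G)\leq|S|/|X|<4|S|/n$. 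So $\tw(G)+1\geq|S|>\tfrac14\varphi(G)n\geq\lfloor\tfrac14\varphi(G)n\rfloor$, and integrality gives $\tw(G)\geq\lfloor\tfrac14\varphi(G)n\rfloor$.

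If instead $n-|S|\leq\tfrac14 n$, then $\tw(G)+1\geq|S|\geq\tfrac34 n>\tfrac14\varphi(G)n$, using your bound $\varphi(G)\leq 2$ for $n\geq 2$.

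The strict inequality absorbs the $+1$ from the bag size, and no case analysis on the value of $\varphi(G)$ is needed.
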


\begin{theorem}[{\citep[Proposition 2]{GROHE2009218}}]
\label{thm:expansion-subgraph}
Let $n$ be a positive integer and let $\epsilon > 0$. For every $n$-vertex graph $G$ with $\tw(G) \geq \epsilon n$, there exists a subgraph $S\subseteq G$ such that $\tw(S) \geq \tfrac{1}{2} \epsilon n$ and $\varphi(S) \geq \tfrac{1}{2}\epsilon$.
\end{theorem}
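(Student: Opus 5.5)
The plan is to pick a minimal counterexample-free subgraph using a potential that combines treewidth and size. Call an induced subgraph $S\subseteq G$ \emph{heavy} if
\[\tw(S) \geq \tfrac{1}{2}\epsilon n + \tfrac{1}{2}\epsilon |V(S)|.\]
Since $\tw(G)\geq \epsilon n = \tfrac12\epsilon n+\tfrac12\epsilon|V(G)|$, the graph $G$ itself is heavy, so heavy subgraphs exist. Choose a heavy $S$ with $|V(S)|$ minimum. Heaviness immediately gives $\tw(S)\geq \tfrac12\epsilon n$. It then suffices to show $\varphi(S)\geq \tfrac12\epsilon$.

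Suppose not. Then there is $X\subseteq V(S)$ with $0<|X|\leq \tfrac12|V(S)|$ and $|N_S(X)|<\tfrac12\epsilon|X|$. Write $N\coloneqq N_S(X)$ and $R\coloneqq V(S)\setminus(X\cup N)$.

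The key structural inequality is
\[\tw(S)\leq |N| + \max\{\tw(S[X]),\tw(S[R])\}.\]
This holds because $S-N$ is the disjoint union of $S[X]$ and $S[R]$, with no edges between them. A tree-decomposition of $S-N$ of width $\max\{\tw(S[X]),\tw(S[R])\}$ is obtained by joining decompositions of the two parts by an edge. Adding $N$ to every bag then gives a tree-decomposition of $S$.

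Both $S[X]$ and $S[R]$ are induced subgraphs of $G$ with strictly fewer vertices than $S$; note $R$ is smaller since $X\neq\emptyset$. By minimality neither is heavy, so
\[\tw(S[X])<\tfrac12\epsilon n+\tfrac12\epsilon|X| \quad\text{and}\quad \tw(S[R])<\tfrac12\epsilon n+\tfrac12\epsilon|R|.\]
(If $R=\emptyset$ the second bound is trivial, with the convention that the empty graph has treewidth $-1$.)

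Combining these bounds gives
\[\tw(S)<\tfrac12\epsilon|X| + \tfrac12\epsilon n + \tfrac12\epsilon\max\{|X|,|R|\}.\]
If the maximum is $|R|$, then $|X|+|R|\leq|V(S)|$. If the maximum is $|X|$, then $2|X|\leq|V(S)|$ because $|X|\leq\tfrac12|V(S)|$. In either case $\tw(S)<\tfrac12\epsilon n+\tfrac12\epsilon|V(S)|$, which contradicts the heaviness of $S$. Hence $\varphi(S)\geq\tfrac12\epsilon$, and $S$ is the required subgraph.

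The main obstacle is choosing the right potential. A naive approach repeatedly deletes poorly-expanding sets $X$ and hopes treewidth survives. That fails because the deleted pieces $S[X]$ may themselves carry large treewidth, up to about $|X|\leq n/2$. Charging $\tfrac12\epsilon$ per vertex in the heaviness condition is what makes both sides of the separation $(X,N,R)$ pay for the separator $N$. The condition $|X|\leq\tfrac12|V(S)|$ then handles the case in which $X$ is the side of larger treewidth. The remaining checks are routine: the degenerate case $R=\emptyset$, and the fact that subgraphs, not just induced subgraphs, are allowed in the conclusion.
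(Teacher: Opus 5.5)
Your proof is correct and complete. Taking a vertex-minimal induced subgraph $S$ with $\tw(S)\geq\tfrac12\epsilon(n+|V(S)|)$ and splitting along a poorly expanding $X$ gives the needed contradiction via $\tw(S)\leq |N_S(X)|+\max\{\tw(S[X]),\tw(S[R])\}$. The hypothesis $|X|\leq\tfrac12|V(S)|$ correctly handles the case where $S[X]$ carries the larger treewidth, and your treatment of $R=\emptyset$ is fine.

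The paper does not prove this statement; it imports it as a black box from Grohe and Marx, so there is no in-paper proof to compare against. Your potential-function argument is a self-contained proof of the cited result. It also gives slightly more: an induced subgraph $S$ with $\tw(S)\geq\tfrac12\epsilon(n+|V(S)|)$.

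One small point is worth making explicit. Since $\tw(S)>0$, the graph $S$ has an edge, so $|V(S)|\geq 2$. This means the minimum defining $\varphi(S)$ ranges over a non-empty family, which your ``suppose not'' step implicitly uses.
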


For $*\in\set{\cprod, \boxtimes, \lex}$, define $\mathdefine{\mathcal{G} * \mathcal{H}} \coloneqq \set{G*H : G\in\mathcal{G}, H\in \mathcal{H}}$.

\begin{theorem}
\label{thm:main-expansion}
For all $*\in \set{\cprod, \boxtimes, \lex}$, if $\mathcal{G}$ and $\mathcal{H}$ are graph classes with positive vertex expansion, then there exists a constant $c>0$ such that every graph $G * H\in \mathcal{G} * \mathcal{H}$ has a subgraph $S$ with $\tw(S) \geq c |V(G*H)|$, hence $|V(S)|\geq c |V(G*H)|-1$, and $\varphi(S) \geq c$.
\end{theorem}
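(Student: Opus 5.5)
Let $\alpha\coloneqq\varphi(\mathcal{G})>0$ and $\beta\coloneqq\varphi(\mathcal{H})>0$. Fix $G\in\mathcal{G}$ and $H\in\mathcal{H}$ with $n\coloneqq|V(G)|$ and $m\coloneqq|V(H)|$. The strategy is to show that $\tw(G*H)\geq \epsilon\, nm$ for a constant $\epsilon>0$ depending only on $\alpha$ and $\beta$. Then \cref{thm:expansion-subgraph} yields a subgraph $S\subseteq G*H$ with $\tw(S)\geq\frac12\epsilon nm$ and $\varphi(S)\geq\frac12\epsilon$, so $c\coloneqq\frac12\epsilon$ works. The bound $|V(S)|\geq \tw(S)+1\geq c|V(G*H)|+1$ is immediate. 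Since $G\cprod H\subseteq G\boxtimes H\subseteq G\lex H$ and treewidth is subgraph-monotone, it suffices to treat $*=\cprod$. Alternatively, one can handle $\boxtimes$ via \cref{thm:main-strong-product} and $\cprod$ via \cref{thm:main-cartesian-product}, with $\lex$ following from $\boxtimes$.

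By \cref{thm:main-cartesian-product}, $\tw(G\cprod H)+1\geq\frac12(\tw(G)+1)(\tw(H)+1)$. By \cref{thm:expansion-treewidth}, $\tw(G)\geq\floor{\frac14\alpha n}$, so $\tw(G)+1>\frac14\alpha n$, and similarly $\tw(H)+1>\frac14\beta m$. Hence
\[\tw(G\cprod H)+1>\tfrac{1}{32}\alpha\beta\, nm.\]
This is stated for $\cprod$; in the strong case the constant is $\frac1{16}$ and no $\frac12$ appears.

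The only obstacle is turning this into $\tw(G\cprod H)\geq\epsilon nm$, because of the additive $-1$ and the small cases. If $nm\geq 64/(\alpha\beta)$, then $\frac1{32}\alpha\beta nm-1\geq\frac1{64}\alpha\beta nm$, and $\epsilon=\frac1{64}\alpha\beta$ works. Otherwise $nm$ is bounded by a constant $N$. In that case we need $\tw(G\cprod H)\geq 1$, i.e.\ the product contains an edge, and then $\epsilon=1/N$ suffices.

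A one-vertex graph has vertex expansion equal to the minimum over an empty set, i.e.\ $+\infty$, and it has treewidth $0$. If both factors are single vertices, the claim degenerates: we take $S=G*H$, read $\tw(S)\geq c$ as failing, and must therefore either assume that graph classes contain graphs with at least two vertices or allow $c\leq$ the needed value. I would handle this by noting that a graph with $n\geq2$ vertices and positive expansion is connected, hence has an edge. So if some factor has at least two vertices, $G*H$ contains an edge and $\tw\geq1$. The fully trivial $1\times1$ case I would exclude or dispatch by convention.

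Taking $\epsilon=\min\{\frac1{64}\alpha\beta,\,1/N\}$ then completes the argument via \cref{thm:expansion-subgraph} with $\epsilon$ in place of its $\epsilon$ and $nm$ in place of $n$.
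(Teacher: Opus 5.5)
Your proposal is correct and follows the paper's proof. You combine \cref{thm:main-cartesian-product} with \cref{thm:expansion-treewidth} to get $\tw(G*H)\geq\tw(G\cprod H)\geq\frac{1}{32}\alpha\beta|V(G*H)|-1$, absorb the $-1$ for large products, handle the boundedly many small products separately, and finish with \cref{thm:expansion-subgraph}. The one difference is minor: the paper defines $\epsilon_1$ as a minimum of $\tw(F)/|V(F)|$ over the finite family of small products, whereas you use $\tw\geq 1$ directly, which shows $\epsilon=\frac{1}{64}\alpha\beta$ works uniformly; your $K_1*K_1$ remark just makes explicit the convention behind the paper's claim that every graph in $\mathcal{G}$ has an edge.
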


\begin{proof}
Let $\epsilon_0\coloneqq\min\set{\varphi(\mathcal{G}), \varphi(\mathcal{H})}$ and let $\mathcal{F}\coloneqq\set{F\in \mathcal{G}*\mathcal{H} : \frac{1}{64}\epsilon_0^2|V(F)| < 1}$. Then $\mathcal{F}$ is finite. Since $\varphi(\mathcal{G}) > 0$, every graph in $\mathcal{G}$ has an edge, thus $\epsilon_1\coloneqq\min\set{\tw(F)/|V(F)|:F \in \mathcal{F}}$ is positive. Let $\epsilon\coloneqq\min\set{\tfrac{1}{64}\epsilon_0^2, \epsilon_1}$. Now consider each $G* H\in \mathcal{G}* \mathcal{H}$. By \cref{thm:main-cartesian-product} and \cref{thm:expansion-treewidth},
\begin{align*}
    \tw(G* H) \geq \tw(G \cprod H) \geq \tfrac{1}{2}(\tw(G)+1)(\tw(H)+1)-1 &\geq \tfrac{1}{2}(\tfrac{1}{4} \epsilon_0 |V(G)|)(\tfrac{1}{4} \epsilon_0 |V(H)|)-1\\
    &= \tfrac{1}{32} \epsilon_0^2 |V(G* H)| - 1.
\end{align*}
If $G* H\not\in \mathcal{F}$, then $\tw(G* H) \geq \tfrac{1}{32}\epsilon_0^2 |V(G* H)| - 1 \geq \tfrac{1}{64}\epsilon_0^2 |V(G* H)|$. On the other hand if $G* H\in \mathcal{F}$, then $\tw(G* H) \geq \epsilon_1 |V(G* H)|$. Hence $\tw(G* H) \geq \epsilon |V(G* H)|$ for all $G* H \in \mathcal{G}* \mathcal{H}$. Therefore \cref{thm:expansion-subgraph} implies the result with $c\coloneqq\tfrac{1}{2} \epsilon$.
\end{proof}

\section{Strong Product Versus Lexicographic Product}
\label{sec:strong-product-vs-lexicographic-product}

This section proves \cref{thm:main-strong-vs-lexicographic}, which states that there is no function $f:\mathbb{N}\rightarrow \mathbb{N}$ such that $\tw(G\lex H) \leq f(\tw(G\boxtimes H))$ for all graphs $G$ and $H$.

We begin with the following lemma:

\begin{lemma}
\label{lem:lexicographic}
For all graphs $G$ and $H$, if $\sB$ is a bramble in $G$ such that $|A|\geq 2$ for every $A\in \sB$, then $\tw(G\lex H) \geq \ord(\sB)|V(H)|-1$.
\end{lemma}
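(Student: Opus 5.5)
The plan is to build a single bramble in $G\lex H$ of order at least $\ord(\sB)|V(H)|$ and then apply \cref{thm:treewidth-duality}. The natural candidate is
\[\sB' \coloneqq \set{A\times V(H) : A\in \sB}.\]
The key observation is that, because every $A\in\sB$ satisfies $|A|\geq 2$ and $G[A]$ is connected, $A$ contains an edge $g_1g_2$ of $G$. In $G\lex H$, every vertex of $\set{g_1}\times V(H)$ is adjacent to every vertex of $\set{g_2}\times V(H)$. This holds regardless of the structure of $H$, even if $H$ is edgeless or disconnected.

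The first step is to check that each $A\times V(H)$ is connected in $G\lex H$. Take two vertices $(a,h)$ and $(a',h')$ of $A\times V(H)$. Pick a path $a=a_0a_1\cdots a_m=a'$ in $G[A]$; if $a=a'$, use a neighbour $a_1\in A$ of $a$ and go $a\to a_1\to a$. Since $g_1g_2\in E(G)$ makes $(g_1,x)(g_2,y)$ an edge for all $x,y\in V(H)$, we can walk $(a_0,h)(a_1,h')(a_2,h')\cdots(a_m,h')$, which stays inside $A\times V(H)$. The second step is that two elements touch. For $A,A'\in\sB$, either they share a vertex $w$, giving the common vertices $\set{w}\times V(H)$, or there is an edge $pq$ with $p\in A$ and $q\in A'$, so $(p,x)(q,x)$ is an edge of $G\lex H$. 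Hence $\sB'$ is a bramble.

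The main step is bounding the order. Let $S$ be a hitting set for $\sB'$ with $|S|<\ord(\sB)|V(H)|$. For each $h\in V(H)$ let $S^h\coloneqq\set{g:(g,h)\in S}$. These sets are disjoint slices summing to $|S|$, so by averaging some $h$ has $|S^h|<\ord(\sB)$, and hence some $A\in\sB$ misses $S^h$. However, $A\times V(H)$ meets $S$ only somewhere in $A\times V(H)$, not necessarily in the $h$-slice, so this naive averaging fails.

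To fix this, I would refine the bramble to the sets $A\times\set{h}$ together with connectors. Define
\[\sB'' \coloneqq \set{(A\times\set{h})\cup(\set{a}\times V(H)) : A\in\sB,\ h\in V(H),\ a\in A}.\]
This is still too large to hit. The cleaner plan is to use $\set{A\times\set{h} : A\in\sB, h\in V(H)}$ directly. Each set $A\times\set{h}$ is connected, because an edge $g_1g_2$ of $G[A]$ gives the edge $(g_1,h)(g_2,h)$. Two such sets $A\times\set{h}$ and $A'\times\set{h'}$ touch: if $A$ and $A'$ are joined by an edge $pq$ we use $(p,h)(q,h')$, and if they share a vertex $w$, then, since $|A|\geq2$, $w$ has a neighbour $w'\in A$, and $(w',h)(w,h')$ is an edge. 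So this is a bramble. A hitting set must meet each $A\times\set{h}$, so for each $h$ the slice $S^h$ hits $\sB$, giving $|S^h|\geq\ord(\sB)$ and $|S|\geq\ord(\sB)|V(H)|$.

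This refined bramble is the one to use; the only delicate point is the case where $A$ and $A'$ share a vertex, which is exactly where the hypothesis $|A|\geq 2$ is needed. Then \cref{thm:treewidth-duality} gives $\tw(G\lex H)+1\geq\ord(\sB)|V(H)|$.
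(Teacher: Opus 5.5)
Your final argument is correct, but it takes a different and simpler route than the paper's. The paper uses a larger bramble. For each $X\in\sB$ it takes every set $\sigma(X)$ obtained by choosing, independently for each $v\in X$, an arbitrary vertex of the fibre $\set{v}\times V(H)$. Touching is shown just as you do. There is an edge $xy\in E(G)$ with $x\in X$ and $y\in Y$, which exists even when $X\cap Y\neq\emptyset$ because $|X|\geq 2$. Moreover, fibres over adjacent vertices of $G$ are completely joined in $G\lex H$.

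The price of the larger family comes in the order bound. The paper takes a minimum hitting set $S$ and uses minimality plus an exchange step. A set that meets $S$ only inside the fibre over $v$ can have its representative of $v$ re-chosen to avoid $S$. This shows that $S$ is a union of whole fibres $F\times V(H)$ with $F$ hitting $\sB$.

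Your family $\set{A\times\set{h} : A\in\sB,\ h\in V(H)}$ is exactly the sub-family of constant choices $\sigma$. Each slice $S^h$ of a hitting set must itself hit $\sB$, so you get order at least $\ord(\sB)|V(H)|$ by a one-line count, with no minimality or exchange argument. This also makes your proof a direct analogue of the paper's proof of \cref{thm:main-strong-product}: the union of the layer brambles is now a bramble for all pairs of layers, not only adjacent ones.

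The paper's larger bramble gives no better bound here. Both families have order exactly $\ord(\sB)|V(H)|$, witnessed by $F\times V(H)$ for a minimum hitting set $F$ of $\sB$. Its only by-product is the structural fact that minimum hitting sets are unions of fibres.

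When writing this up, delete the exploratory detours. The family $\set{A\times V(H) : A\in\sB}$ has order at most $\ord(\sB)$, since it is hit by $F\times\set{h_0}$ for any $h_0\in V(H)$, and $\sB''$ plays no role.
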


\begin{proof}
For each $v\in V(G)$, let $\mathdefine{H_v}\coloneqq (G\lex H)[\set{v}\times V(H)]$. For every subset $X\subseteq V(G)$ and every function $\sigma:X\rightarrow V(G\lex H)$, if $\sigma(v)\in V(H_v)$ for each $v\in X$, then we say that $\sigma$ \define{represents} $X$. Observe that if $uv$ is an edge in $G[X]$, then $\sigma(u)\sigma(v)$ is an edge in $G\lex H$. Therefore if $X$ is connected in $G$, then $\sigma(X)$ is connected in $G\lex H$. For each $X\in \sB$, let $\mathdefine{\mathscr{R}_X} \coloneqq \set{\sigma(X) : \sigma\text{ represents }X}$. Then $\mathdefine{\sB'}\coloneqq\bigcup_{X\in \sB}\mathscr{R}_X$ is a family of connected sets of vertices in $G\lex H$. We claim that $\sB'$ is a bramble in $G\lex H$. To see this, consider any $X', Y'\in \sB'$. Then there exists $X, Y\in \sB$ and there exists $\sigma$ and $\tau$ that represent $X$ and $Y$ respectively such that $X'=\sigma(X)$ and $Y'=\tau(Y)$. Since $\sB$ is a bramble in $G$ and $|X|\geq 2$ and $|Y|\geq 2$, there exists an edge $xy\in E(G)$ such that $x\in X$ and $y\in Y$. Since $\sigma(x)\in V(H_x)$ and $\tau(y)\in V(H_y)$, $\sigma(x)\tau(y)$ is an edge in $G\lex H$ between $X'$ and $Y'$. Hence $X'$ and $Y'$ touch, and $\sB'$ is a bramble in $G\lex H$ as claimed.

Let $S$ be a minimum-sized hitting set for $\sB'$, and let $F\coloneqq\set{v\in V(G) : V(H_v)\cap S \not=\emptyset}$. We claim that if $v\in F$, then $V(H_v)\subseteq S$. Indeed for if not, then there exists $(v,u)\in V(H_v)\cap S$ and $(v,w)\in V(H_v)\setminus S$. By choice of $S$, there exists $X'\in \sB'$ such that $S\cap X'=\set{(v,u)}$. Then there exists $X\subseteq V(G)$ and $\sigma$ that represents $X$ such that $X'=\sigma(X)$. Define $\tau:X\rightarrow V(G\lex H)$ by $\tau(v)=(v,w)$ and $\tau(x)=\sigma(x)$ for all $x\not= v$. Then $\tau$ represents $X$, so $\tau(X)\in \sB'$. However since $(v,w)\not\in S$, $\tau(X)\cap S=\emptyset$, which implies that $S$ is not a hitting set for $\sB'$, a contradiction. It follows that $\bigcup_{v\in F}V(H_v)=S$. Next, assume for a contradiction that $|S| < \ord(\sB)|V(H)|$. Then,
\[|F||V(H)| = \sum_{v\in F}|V(H_v)| = |S| < \ord(\sB)|V(H)|,\]
implying that $|F|\leq \ord(\sB)-1$. Then $F$ is not a hitting set for $\sB$, so there exists $X\in \sB$ such that $X\cap F=\emptyset$, which implies that every element of $\mathscr{R}_X$ is disjoint from $S$, a contradiction. Hence $\bn(G\lex H)\geq \ord(\sB') = |S| \geq \ord(\sB)|V(H)|$. Then by \cref{thm:treewidth-duality}, $\tw(G\lex H) \geq \ord(\sB)|V(H)|-1$.
\end{proof}

As an aside, we point out the following corollary of \cref{lem:lexicographic} that may be of independent interest.

\begin{corollary}
For all graphs $G$ and $H$, if $G$ has a maximum-order bramble with no singleton sets, then $\tw(G\circ H)=(\tw(G)+1)|V(H)|-1$.
\end{corollary}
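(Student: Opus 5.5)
The corollary follows by proving two matching inequalities. The lower bound comes straight from \cref{lem:lexicographic}; the upper bound comes from blowing up an optimal tree-decomposition of $G$.

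For the lower bound, let $\sB$ be a maximum-order bramble in $G$ with no singleton sets. Then $|A|\geq 2$ for every $A\in\sB$. Also $\ord(\sB)=\bn(G)=\tw(G)+1$ by \cref{thm:treewidth-duality}. Applying \cref{lem:lexicographic} to $\sB$ gives
\[\tw(G\lex H)\geq \ord(\sB)|V(H)|-1=(\tw(G)+1)|V(H)|-1.\]

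For the upper bound, take a minimum-width tree-decomposition $(T,\beta)$ of $G$ and define $\beta^*(t)\coloneqq \beta(t)\times V(H)$ for each node $t$. I claim $(T,\beta^*)$ is a tree-decomposition of $G\lex H$.

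\emph{Vertex-property.} For a vertex $(g,h)$, the set of nodes whose bag contains it is $\set{t : g\in\beta(t)}$. This is non-empty and connected in $T$ because $(T,\beta)$ has the vertex-property.

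\emph{Edge-property.} An edge $(g_1,h_1)(g_2,h_2)$ of $G\lex H$ has one of two forms.
\begin{itemize}
\item If $g_1g_2\in E(G)$, some bag $\beta(t)$ contains both $g_1$ and $g_2$, so $\beta^*(t)$ contains both endpoints.
\item If $g_1=g_2$, any bag $\beta(t)$ containing $g_1$ suffices, since $\beta^*(t)$ then contains both endpoints.
\end{itemize}

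Every bag satisfies $|\beta^*(t)|\leq(\tw(G)+1)|V(H)|$, so $\tw(G\lex H)\leq(\tw(G)+1)|V(H)|-1$. Combined with the lower bound, this gives the claimed equality.

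No step is a real obstacle. The only point needing care is the equality $\ord(\sB)=\tw(G)+1$, which is exactly where the hypothesis enters: the bramble is chosen of maximum order, and also without singletons so that \cref{lem:lexicographic} applies.
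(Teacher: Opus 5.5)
Your proof is correct and is the argument the paper intends (the paper states the corollary without writing out a proof). The lower bound is \cref{lem:lexicographic} applied to a maximum-order singleton-free bramble, combined with \cref{thm:treewidth-duality}. The upper bound is the standard blow-up $\beta(t)\times V(H)$ of an optimal tree-decomposition of $G$, which holds for all $G$ and $H$ and which the paper leaves implicit.
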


We now prove \cref{thm:main-strong-vs-lexicographic}.

\begin{proof}[Proof of \cref{thm:main-strong-vs-lexicographic}]
The edges of $K_3$ form a strict bramble of order $2$. Therefore $\tw(K_3\lex P_n) \geq 2n-1$ by \cref{lem:lexicographic}. On the other hand for each $n\geq 2$, $\tw(K_3\boxtimes P_n) \leq (\tw(P_n)+1)|V(K_3)|-1 = 5$. Consequently, there is no function $f:\mathbb{N}\rightarrow \mathbb{N}$ such that $\tw(K_3\lex P_n) \leq f(\tw(K_3\boxtimes P_n))$.
\end{proof}

\section{Strict Bramble Number of Cartesian Products}
\label{sec:strict-bramble-number-of-cartesian-products}

This section proves \cref{thm:main-sbn}, which states that for all graphs $G$ and $H$
\[\sbn(G\cprod H) \geq \sbn(G)\sbn(H).\]

The proof of \cref{thm:main-sbn} is essentially the same as the proof of \cref{thm:main-strong-product}. The main difference is that we replace \cref{thm:treewidth-duality} with the following duality for strict bramble number by \citet*{lardas2023strict}.
A \define{lenient tree-decomposition} of a graph $G$ is a pair $(T,\ell)$ consisting of a tree $T$ and a function $\ell:V(T)\rightarrow 2^{V(G)}$ with the following two properties: (\define{vertex-property}) for every vertex $v\in V(G)$, the set $\set{t\in V(T) : v\in \ell(t)}$ is non-empty and connected in $T$; and (\define{lenient-property}) for every edge $uv\in E(G)$, the sets $\set{t\in V(T) : u\in \ell(t)}$ and $\set{t\in V(T) : v\in \ell(t)}$ touch in $T$. The vertices of $T$ are called \define{nodes}, and the sets $\ell(t)$ are called \define{bags}. The \define{width} of $(T,\ell)$ is $\max_{t\in V(T)} |\ell(t)|$. Note that a lenient tree-decomposition may not be a tree-decomposition.

\begin{theorem}[\cite{lardas2023strict}]
\label{thm:sbn-duality}
For every graph $G$, the minimum width of a lenient tree-decomposition of $G$ equals $\sbn(G)$.
\end{theorem}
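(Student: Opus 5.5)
The statement has two directions: a lower bound on the width of every lenient tree-decomposition, and the existence of a lenient tree-decomposition of width $\sbn(G)$. I would prove them separately. The first is a Helly-type argument modelled on \cref{fact:trace-of-connected-sets-td,fact:bag-bramble}. The second is the real content of the result of \citet{lardas2023strict}, and I would model it on the short proofs of \cref{thm:treewidth-duality} by \citet{BELLENBAUM_DIESTEL_2002} and \citet{mazoit2013simpleprooftreewidthduality}.

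\emph{Lower bound.} Let $(T,\ell)$ be a lenient tree-decomposition of $G$ and let $\sS$ be a strict bramble. For $A\in\sS$, let $(T,\ell)\angle{A}$ be the set of nodes whose bag meets $A$. This set is a union of the subtrees $T_v$, $v\in A$, where $T_v\coloneqq\set{t : v\in\ell(t)}$. Each $T_v$ is connected by the vertex-property. For each edge $uv$ of $G[A]$, the lenient-property says $T_u$ and $T_v$ touch, so their union is connected. Since $A$ is connected, $(T,\ell)\angle{A}$ is a subtree of $T$; this is the lenient analogue of \cref{fact:trace-of-connected-sets-td}. If $A,B\in\sS$ and $w\in A\cap B$, then $T_w$ lies in both $(T,\ell)\angle{A}$ and $(T,\ell)\angle{B}$, so these subtrees intersect. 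By the Helly property of subtrees of a tree, some node $t$ lies in all of them. Then $\ell(t)$ is a hitting set for $\sS$, so the width of $(T,\ell)$ is at least $\ord(\sS)$. Here strictness of the bramble is exactly what the lenient-property needs.

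\emph{Upper bound.} Let $k\coloneqq\sbn(G)$. I want to show that $G$ has a lenient tree-decomposition of width $k$, arguing by contradiction. First I would identify what a tree edge $st$ of a lenient decomposition separates. It splits $V(G)$ into vertices whose subtrees lie entirely on the $s$-side, vertices whose subtrees lie entirely on the $t$-side, and a middle set $M\subseteq\ell(s)\cup\ell(t)$. The $s$-side and $t$-side may still be joined by edges, provided every such edge has its ends in $\ell(s)$ and $\ell(t)$ respectively.

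I would then run a Bellenbaum--Diestel style argument. Call a connected set $C$ \emph{decomposable with boundary $X$}, where $|X|\leq k$, if $G[C\cup X]$ has a lenient decomposition of width at most $k$ with $X$ contained in one bag. Suppose $G$ itself is not decomposable. I would try to build a strict bramble of order $k+1$ as follows. Take the family of connected sets $A$ such that, for every $X$ with $|X|\leq k$, the set $A$ meets $X$ or meets the unique non-decomposable ``big side'' of $X$. Then I would show that these sets pairwise intersect, not merely touch.

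The pairwise-intersection step is the main obstacle. For ordinary brambles, touching suffices because an edge can always be covered by a single bag. Here I would use leniency: if two candidate sets $A$ and $B$ touch only via an edge $ab$, I would glue two partial decompositions along a tree edge with $a$ in one bag and $b$ in the other. This should show that the configuration was decomposable after all, giving a contradiction.

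If this direct argument stalls, my fallback is to encode lenient decompositions of $G$ as ordinary tree-decompositions of an auxiliary graph. The natural candidate is the $1$-subdivision of $G$ with the subdivision vertices given weight zero. I would then apply a weighted form of \cref{thm:treewidth-duality} and translate the resulting weighted brambles back into strict brambles of $G$, where two sets touching through a subdivision vertex forces them to share a vertex of $G$.
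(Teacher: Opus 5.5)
Your lower bound is correct. It is exactly the argument behind \cref{fact:trace-of-connected-sets-ltd,fact:lenient-bag-strict-bramble}, which is all the paper itself uses; the paper does not prove the theorem but cites \citet{lardas2023strict}.

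\textbf{The gap is the upper bound.} That direction is the real content of the result, and your main plan fails at the step you flag. Test it on $G=K_{2k+1}$ with $k\geq 1$. The $(k+1)$-subsets form a strict bramble of order $k+1$, so $G$ has no lenient tree-decomposition of width at most $k$, which is the only hypothesis your construction uses. For every $X$ with $|X|\leq k$, the graph $G-X$ is connected and non-decomposable, so its big side is $V(G)\setminus X$. Consequently every non-empty set satisfies your membership condition, including every singleton. Two distinct singletons are disjoint, so the claim that your family is pairwise intersecting is false, not merely unproved.

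The uniqueness you assume also fails. If two copies of $K_{2k+1}$ share one vertex $x$, then both components of $G-x$ are non-decomposable with boundary $\set{x}$.

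The right target is the family of chosen big sides $\set{\beta(X) : |X|\leq k}$ with $\beta(X)\cap\beta(Y)\neq\emptyset$ for all $X,Y$. Such a ``strict haven'' is a strict bramble of order $>k$, and conversely every strict bramble of order $>k$ induces one: the members avoiding $X$ pairwise intersect, so they lie in one component of $G-X$. What has to be proved is that non-decomposability forces such a consistent choice. Your gluing remark does not say which two partial decompositions are glued, or why the result would contradict the choice of $\beta$.

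Your decomposability notion also only uses boundaries inside a single bag, that is, node separations. The only way a lenient decomposition differs from a tree-decomposition is that an edge may be realised across a tree edge $st$. So any proof must exploit separations whose boundary is spread over $\ell(s)\cup\ell(t)$, which has up to $2k$ vertices with edges between the two halves.

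\textbf{The fallback fails for a simpler reason.} In the $1$-subdivision with weight-zero subdivision vertices, you can put every subdivision vertex into every bag at no cost. Original vertices are pairwise non-adjacent there, so each can sit alone in its own bag. Every graph then has weighted width $1$, and weighted duality yields nothing. With unit weights instead, you just recover the treewidth of $G$, since subdividing edges does not change treewidth once it is at least $2$.

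The translation step is also wrong. The sets $\set{a}$ and $\set{e_{ab},b}$ touch in the subdivision, yet their traces $\set{a}$ and $\set{b}$ on $V(G)$ are disjoint.

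So the upper bound remains unproved in your proposal. It needs a genuine argument handling tree-edge separations, which the paper takes from \citet{lardas2023strict}.
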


We define analogous notation from \cref{sec:treewidth-of-product-graphs} for lenient tree-decompositions. Let $G$ be a graph and let $(T,\ell)$ be a lenient tree-decomposition of $G$. For each set $X\subseteq V(G)$, let $\mathdefine{(T,\ell)\angle{X}}\coloneqq\set{t\in V(T) : X\cap \ell(t)\not=\emptyset}$.

The following basic facts about lenient tree-decompositions and strict brambles have proofs that are similar to \cref{fact:trace-of-connected-sets-td,fact:bag-bramble}:

\begin{fact}
\label{fact:trace-of-connected-sets-ltd}
For every graph $G$ and every lenient tree-decomposition $(T,\ell)$ of $G$, if $X\subseteq V(G)$ is connected in $G$, then $(T,\ell)\angle{X}$ is connected in $T$.
\end{fact}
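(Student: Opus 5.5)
To prove the fact, I will show that for a connected set $X$, the node set $(T,\ell)\angle{X}$ is a union of subtrees glued along the edges of a connected subgraph of $G[X]$. Recall that for a vertex $v\in V(G)$ the set $T_v\coloneqq\set{t\in V(T) : v\in \ell(t)}$ is non-empty and connected in $T$ by the vertex-property. By definition, $(T,\ell)\angle{X}=\bigcup_{v\in X}T_v$, so it suffices to show that this union of connected subsets of $T$ is connected.

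Fix a spanning tree of $G[X]$, or simply argue along paths in $G[X]$. First, for each edge $uv$ of $G[X]$, the lenient-property says that $T_u$ and $T_v$ touch in $T$. In other words, $T[T_u\cup T_v]$ is connected, because $T_u$ and $T_v$ are each connected and either share a node or are joined by an edge of $T$.

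Second, I induct along a path. Take any $x,y\in X$ and a path $x=v_0,v_1,\dots,v_m=y$ in $G[X]$. Since each pair $T_{v_{i-1}}\cup T_{v_i}$ induces a connected subgraph of $T$, the union $\bigcup_{i=0}^m T_{v_i}$ is connected: each new set $T_{v_i}$ attaches to the already-connected union through $T_{v_{i-1}}$. Hence every node of $T_x$ is joined to every node of $T_y$ inside $T\big[\bigcup_{v\in X}T_v\big]$. Since every node of $(T,\ell)\angle{X}$ lies in some $T_x$ with $x\in X$, the whole set is connected. The case $X=\emptyset$ is vacuous, or excluded by convention.

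There is no real obstacle here. The only point needing care is that the notion of ``touch'' used for subsets of $T$ means connectedness of the induced subgraph of the union. That is exactly what is needed for the gluing step, and the gluing argument is the same as in the proof of \cref{fact:trace-of-connected-sets-td}, with ``share a node'' replaced by ``touch''.
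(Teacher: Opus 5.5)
Your proof is correct and is essentially the paper's own argument. The paper inducts on $|X|$: it removes a leaf $u$ of a spanning tree of $G[X]$ and uses the lenient-property on the edge from $u$ to its neighbour $v$ to glue $(T,\ell)\angle{u}$ onto the connected set $(T,\ell)\angle{X\setminus\{u\}}$. This is the same touching-and-gluing step you carry out along paths in $G[X]$.
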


\begin{comment}
\begin{proof}[Proof of Fact]
Proceed by induction on $|X|$. The base case of $|X|=1$ holds by axioms. Now suppose $|X|\geq 2$ and the result holds for smaller values of $|X|$. Let $u$ be a leaf of a spanning tree of $G[X]$ and let $v\in X\cap N_G(u)$. Then $X\setminus \set{u}$ and $\set{u}$ are connected in $G$. By induction $(T,\ell)\angle{X\setminus \set{u}}$ and $(T,\ell)\angle{u}$ are connected in $T$. Then the lenient-property implies that $(T,\ell)\angle{u}$ and $(T,\ell)\angle{v}$ touch, hence $(T,\ell)\angle{v} \subseteq (T,\ell)\angle{X\setminus \set{u}}$ implies that $(T,\ell)\angle{u}$ and $(T,\ell)\angle{X\setminus \set{u}}$ touch. Therefore $(T,\ell)\angle{X} = (T,\ell)\angle{X\setminus \set{u}} \cup (T,\ell)\angle{u}$ is connected in $T$, as desired.
\end{proof}
\end{comment}

\begin{fact}
\label{fact:lenient-bag-strict-bramble}
For every strict bramble $\sS$ and every lenient tree-decomposition of a graph $G$, some bag is a hitting set for $\sS$.
\end{fact}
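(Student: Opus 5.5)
The statement to prove is \cref{fact:lenient-bag-strict-bramble}: if $\sS$ is a strict bramble in $G$ and $(T,\ell)$ is a lenient tree-decomposition of $G$, then some bag $\ell(t)$ meets every element of $\sS$. The plan is to mimic the standard Helly-type argument behind \cref{fact:bag-bramble}, using \cref{fact:trace-of-connected-sets-ltd} in place of \cref{fact:trace-of-connected-sets-td}. Strictness supplies what the edge-property normally would: touching in $G$ would only make the traces touch in $T$, whereas pairwise intersection gives the stronger conclusion needed below.

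First, for each $X\in\sS$ consider $T_X\coloneqq(T,\ell)\angle{X}$. Since $X$ is non-empty and connected in $G$, \cref{fact:trace-of-connected-sets-ltd} together with the vertex-property shows that $T_X$ is a non-empty subtree of $T$. Second, for $X,Y\in\sS$ pick $w\in X\cap Y$, which exists because $\sS$ is strict. By the vertex-property, some node $t$ has $w\in\ell(t)$, and then $t\in T_X\cap T_Y$. So the subtrees $\{T_X : X\in\sS\}$ pairwise intersect.

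Third, apply the Helly property for subtrees of a tree. Since $\sS$ is finite ($G$ is finite), a finite family of pairwise intersecting subtrees of $T$ has a common node $t$. If a self-contained proof is wanted, argue by induction on the number of subtrees, using that for three subtrees $A,B,C$ with points $a\in B\cap C$, $b\in A\cap C$, $c\in A\cap B$, the median of $a,b,c$ in $T$ lies in all three. Alternatively, orient each edge $st$ of $T$ toward the side containing some $T_X$ that avoids the other side, and take a sink. The common node $t$ then satisfies $\ell(t)\cap X\neq\emptyset$ for every $X\in\sS$, so $\ell(t)$ is a hitting set.

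The only step with real content is the Helly property. If $\sS$ is empty, any bag works trivially. I expect no genuine obstacle, since the second step is immediate from strictness.
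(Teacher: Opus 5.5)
Your proof is correct and follows the paper's own argument: by \cref{fact:trace-of-connected-sets-ltd} the traces $(T,\ell)\angle{X}$ for $X\in\sS$ are subtrees of $T$, they pairwise intersect because $\sS$ is strict, and the Helly property for subtrees then gives a node whose bag hits every element of $\sS$. Your explicit use of the vertex-property for the pairwise intersections, and your sketches of the Helly property, supply details that the paper leaves implicit.
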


\begin{comment}
\begin{proof}[Proof of Fact]
Let $(T,\ell)$ be a lenient tree-decompositions of $G$. Since $\sS$ is a strict bramble, $\set{T[(T,\ell)\angle{A}]:A\in \sS}$ is a family of pairwise intersecting subtrees of $T$. Then by the Helly property there exists $t\in \bigcap_{A\in S}(T,\ell)\angle{A}$. Therefore $\ell(t)$ is a hitting set for $\sS$, as desired.
\end{proof}
\end{comment}

We now prove \cref{thm:main-sbn}.

\begin{proof}[Proof of \cref{thm:main-sbn}]
Let \define{$(T, \ell)$} be a minimum-width lenient tree-decomposition of $G\cprod H$. Then \cref{thm:sbn-duality} implies $\sbn(G\cprod H) \geq |\ell(t)|$ for all $t\in V(T)$. Let \define{$\sS$} be a maximum-order strict bramble in $G$. For each vertex $v\in V(H)$, let $\mathdefine{\sS_v}\coloneqq\set{A\times \set{v} : A\in \sS}$. Then each $\sS_v$ is a strict bramble in $G\cprod H$ and $\ord(\sS_v)=\ord(\sS)=\sbn(G)$. For each node $t\in V(T)$, let
\[\mathdefine{\ell'(t)} \coloneqq \set{v\in V(H) : \ell(t) \text{ is a hitting set for }\sS_v}.\]

We show that $(T,\ell')$ is a lenient tree-decomposition of $H$. Consider any vertex $v\in V(H)$. Since $\sS_v$ is a strict bramble in $G\cprod H$ and $(T,\ell)$ is a lenient tree-decomposition of $G\cprod H$, \cref{fact:lenient-bag-strict-bramble} implies that there exists a node $x\in V(T)$ such that $\ell(x)$ is a hitting set for $\sS_v$, which implies $v\in \ell'(x)$. Suppose there exists a node $y\in V(T-x)$ such that $v\in \ell'(y)$. Then for all $A\times \set{v}\in \sS_v$, since $A\times \set{v}$ is connected and intersects both $\ell(x)$ and $\ell(y)$, \cref{fact:trace-of-connected-sets-ltd} implies that $A\times \set{v}$ intersects $\ell(t)$ for every node $t \in V(xTy)$. Consequently, $\ell(t)$ is a hitting set for $\sS_v$ for all $t\in V(xTy)$. Thus $\set{t\in V(T) : v\in \ell'(t)}$ is non-empty and connected in $T$, and $(T,\ell')$ has the vertex-property. 
Next, assume for a contradiction that $(T,\ell')$ does not have the lenient-property. Then there exists an edge $uv\in E(H)$ such that the sets of nodes $U\coloneqq\set{t\in V(T) : u\in \ell'(t)}$ and $V\coloneqq\set{t\in V(T) : v\in \ell'(t)}$ do not touch. As argued above, $U$ and $V$ are non-empty and connected in $T$. Then there exists $t\in V(T) \setminus (U\cup V)$ that separates $U$ and $V$ in $T$. Since $t\not\in U\cup V$, $\ell(t)$ is neither a hitting set for $\sS_u$ nor for $\sS_v$. Hence there exists $X\times \set{u}\in \sS_u$ and $Y\times \set{v} \in \sS_v$ that are disjoint from $\ell(t)$. Since $U$ and $V$ are non-empty, $(T,\ell)\angle{X\times \set{u}} \cap U\not=\emptyset$ and $(T,\ell)\angle{Y\times \set{v}} \cap V\not=\emptyset$, therefore $(T,\ell)\angle{X\times \set{u}}$ and $(T,\ell)\angle{Y\times \set{v}}$ lie in different components of $T-t$. Then the lenient-property of $(T,\ell)$ implies that $X\times \set{u}$ and $Y\times \set{v}$ do not touch in $G\cprod H$. However since $\sS$ is strict, $X\cap Y\not=\emptyset$, thus $uv\in E(H)$ implies that $X\times \set{u}$ and $Y\times \set{v}$ touch in $G\cprod H$, a contradiction. Therefore $(T,\ell')$ has the lenient-property, and $(T,\ell')$ is a lenient tree-decomposition of $H$ as claimed. Fix $t\in V(T)$ such that $|\ell'(t)|$ is maximum. So $|\ell'(t)| \geq \sbn(H)$ by \cref{thm:sbn-duality}. Then
\[\sbn(G\cprod H) \geq |\ell(t)| \geq \sum_{v\in \ell'(t)}|\ell(t)\cap (V(G)\times \set{v})| \geq \sum_{v\in \ell'(t)}\ord(\sS_v) \geq \sbn(G)\sbn(H). \qedhere\]
\end{proof}

We end with two open problems. The first is about the pathwidth of a Cartesian product:

\begin{question}
\label{ques:cartesian-product-pw}
Does \cref{cor:pathwidth-cartesian-product} hold when $\tw(G)$ is replaced by $\pw(G)$?
\end{question}

Since \citet{HARVEY2018157} showed that $\pw(K_m \cprod K_n) = (\frac{1}{2}+o(1))mn$ for $m\geq n$, a positive answer to \cref{ques:cartesian-product-pw} would be optimal. The second is the question of \citet*{HickingbothamStructuralPropertiesofGraphProducts} that inspired \cref{thm:main-strong-vs-lexicographic}:

\begin{question}[Hickingbotham and Wood \cite{HickingbothamStructuralPropertiesofGraphProducts}]
\label{ques:strong-vs-cart}
Is $\tw(G\boxtimes H) \in \mathcal{O}(\tw(G\cprod H))$?
\end{question}

The following argument due to Neel Kaul (private communication, 2026) shows that $\tw(G\boxtimes H) \leq (\degen(G)+1)(\tw(G\cprod H)+1)-1$, where $\degen(G)$ is the degeneracy of $G$: Let $(T,\beta)$ be a minimum-width tree-decomposition of $G\cprod H$ and consider an orientation of the edges of $G$ such that $\Delta^+(G)=\degen(G)$. For each node $t\in V(T)$, let $\beta'(t)\coloneqq\bigcup_{(g,h)\in \beta(t)} (\set{g}\cup N_G^+(g))\times \set{h}$. Then $(T,\beta')$ is a tree-decomposition $G\cprod H$. Now consider any edge $(g_1, h_1)(g_2, h_2)\in E(G\boxtimes H)\setminus E(G\cprod H)$. Then $g_1g_2\in E(G)$ and $h_1h_2\in E(H)$. Without loss of generality $g_2\in N_G^+(g_1)$. Since $(g_1, h_1)(g_1, h_2)\in E(G\cprod H)$, there exists $t\in V(T)$ such that $\set{(g_1, h_1), (g_1, h_2)}\subseteq \beta(t)$. Hence $\set{(g_1, h_1), (g_2, h_2)} \subseteq \beta'(t)$. Therefore $(T,\beta')$ is a tree-decomposition of $G\boxtimes H$. Finally, since $|\beta'(t)| \leq (\Delta^+(G)+1)|\beta(t)|$ for all $t\in V(T)$, we have $\tw(G\boxtimes H) \leq (\degen(G)+1)(\tw(G\cprod H)+1)-1$. A consequence of this argument is that $\tw(G\cprod H)$ and $\tw(G\boxtimes H)$ are polynomially tied.

\section*{\large Acknowledgements}
Thanks to David Wood and Neel Kaul for fruitful discussions and for their feedback on the paper. Thanks to Jung Hon Yip for feedback on the paper.

\setlength{\bibsep}{0.1ex}

% \bibliography{references}

\end{document}